%
%
%

\documentclass{svproc}
%
%
\usepackage{amsmath}
\usepackage{mathtools}
\usepackage{amssymb}
\usepackage{hyperref}
\usepackage{graphicx}
\font\mfett=cmmib10 at10pt
\font\mfetts=cmmib10 at8pt
\def\alphar{\hbox{\mfett\char011}}
\def\deltar{\hbox{\mfett\char014}}
\def\deltas{\hbox{\mfetts\char014}}
\usepackage{url}

\begin{document}
\mainmatter              
\title{Modifications of Prony's Method for the Recovery and Sparse Approximation of  Generalized Exponential Sums}
\titlerunning{Modifications of Prony's Method}  
%
\author{Ingeborg Keller\inst{1} \and Gerlind Plonka\inst{1}
}
\authorrunning{Ingeborg Keller \and Gerlind Plonka} 
%
\tocauthor{Ingeborg Keller and Gerlind Plonka}
\institute{University of G\"ottingen, Institute for Numerical and Applied Mathematics, Lotzestra{\ss}e 16-18, 37083 G\"ottingen,  Germany,\\
\email{i.keller,plonka@math.uni-goettingen.de},\\ WWW home page:
\texttt{http://na.math.uni-goettingen.de}
}

\maketitle              

\begin{abstract}
In this survey we describe some modifications of Prony's method. In particular, we consider the recovery of  general expansions into eigenfunctions of linear differential operators  of first order and show, how these expansions can be recovered from function samples using generalized shift operators. We derive an ESPRIT-like algorithm for the generalized recovery method and show, that this approach can  be directly used to reconstruct classical exponential sums from non-equispaced data. Furthermore, we derive  a modification of Prony's method  for sparse approximation with exponential sums  which leads to a non-linear least-squares problem.

\keywords{generalized Prony method, generalized exponential sums, shifted Gaussians, eigenfunctions of linear operators, sparse signal approximation, nonstationary signals}
\end{abstract}
\section{Introduction: Recovery of Exponential Sums} \label{sec1}

The recovery and sparse approximation of structured functions is a fundamental problem in many areas of signal processing and engineering.  
In particular, exponential sums and their generalizations play an important role in time series analysis and in system theory \cite{Hau90,Lang00}, in the theory of annihilating filters, and for the recovery of signals with finite rate of innovation \cite{Dra07,VMB02,Poh10,Uri13,BSBV17}, as well as for linear prediction  methods \cite{Mano05,Stoica05}. 
For system reduction, Prony's method is related to the problem of low-rank approximation of structured matrices (particularly Hankel matrices) and corresponding nonlinear least-squares problems \cite{Mar18,UM14}.
There is a close relation between Prony's method and Pad\'{e} approximation \cite{Bar05,Cuyt18}. 
Exponential sums started to become more important also for sparse approximation of smooth functions, see \cite{BM05,BH05,Ha19,PP19}, and this question is closely related to approximation in Hardy spaces and the theory of Adamjan, Arov and Krein, see \cite{AAK71,ACH11,PP16}.

\subsection{The Classical Prony Method} \label{classicalpronysubsec}
A fundamental problem discussed in many papers is the recovery of exponential sums of the form 
\begin{equation} 
\label{exp}
f(x)\coloneqq \sum_{j=1}^{M} c_{j} \, {\mathrm e}^{\alpha_{j} x}= \sum_{j=1}^{M} c_{j} \, z_{j}^{x}, \qquad \textrm{with} \quad z_{j}\coloneqq {\mathrm e}^{\alpha_{j}},
\end{equation}
where the coefficients $c_{j} \in {\mathbb{C}}\setminus \{ 0 \}$  as well as the pairwise different frequency parameters  $\alpha_{j} \in {\mathbb{C}}$ ($j=1,\dots,M$) or equivalently, $z_{j} \in \mathbb{C}$ are unknown. For simplicity we assume that the number of terms $M$ is given beforehand. 
One important question appears: What information about $f$ is  needed in order to solve this recovery problem uniquely?
\smallskip

The classical Prony method uses the equidistant samples $f(0), \, f(1), \ldots ,$  $f(2M-1)$. Indeed, if we suppose that $\textrm{Im} \, \alpha_{j} $, $j=1,\dots,M$ lies in a predefined interval of length $2\pi$, as e.g. $[-\pi, \, \pi)$, these $2M$ samples are sufficient.
This can be seen as follows.

We can view $f(x)$  as the solution of a homogeneous linear difference equation of order $M$ with constant coefficients and try to identify these constant coefficients in a first step. We define the characteristic polynomial with the help of its (yet unknown) zeros  $z_{j} = {\mathrm e}^{\alpha_{j}}$,$j=1,\dots,M$, and consider its monomial representation, 
$$ p(z) \coloneqq \prod_{j=1}^{M} ( z - {\mathrm e}^{\alpha_{j}}) = z^{M} + \sum_{k=0}^{M-1} p_{k} \, z^{k}. $$
Then the coefficients $p_{k}$, $k=0, \ldots , M-1$, and $p_{M} = 1$ satisfy 
$$\sum_{k=0}^{M} p_{k}  f(k+m) = \sum_{k=0}^{M} p_{k} \sum_{j=1}^{M} c_{j}  z_{j}^{k+m} = \sum_{j=1}^{M} c_{j}  z_{j}^{m} \sum_{k=0}^{M} p_{k}  z_{j}^{k} 
= \sum_{j=1}^{M} c_{j}  z_{j}^{m}  p(z_{j}) = 0 $$
for all $m \in \mathbb{N}$. Thus the coefficients of the linear difference equation  $p_{k}$ can be computed by solving the linear system
$$ \sum_{k=0}^{M-1} p_{k} \, f(k+m) = - f(M+m), \qquad  m=0, \ldots , M-1. $$
Knowing $p(z)$, we can simply compute its zeros $z_{j} = {\mathrm e}^{\alpha_{j}}$, and in a further step the coefficients $c_{j}$, $j=1, \ldots , M$, by solving the system
$$ f(\ell ) = \sum_{j=1}^{M} \alpha_{j} \, z_{j}^{\ell}, \qquad \ell=0, \ldots , 2M-1. $$
In practice there are different numerical algorithms available for this method, which take care for the inherit numerical instability of this approach, see e.g. \cite{HS90,PT14,PT10,RK89}.
Note that for a given arbitrary vector $(f_{k})_{k=0}^{2M-1}$ the interpolation problem 
$$ f_{k} = \sum_{j=1}^{M} c_{j} \, z_{j}^{k}, \qquad k=0, \ldots, 2M-1, $$
may not  be solvable, see e.g. \cite{chunaev16}. The characteristic polynomial $p(z)$ of the homogeneous difference equation $\sum_{k=0}^{M} p_{k} f_{k+m} = 0$, $m=0, \ldots , M-1$, may have zeros with multiplicity greater than $1$, whereas the exponential sum in (\ref{exp}) is only defined for pairwise different zeros. In this paper, we will will exclude the case of zeros with multiplicity greater than $1$. However, the zeros of the characteristic polynomial  $e^{\alpha_{j}}$ resp. the parameters $\alpha_j$, $j=1,\dots,M$, may be arbitrarily close. This may lead to highly ill-conditioned  matrices.
\subsection{Content of this Paper}\label{subseccont}
In this paper, we will particularly consider the following questions.
\begin{enumerate}
\item How can we generalize Prony's method in order to recover other expansions than (\ref{exp})?
\item What kind of information is needed in order to recover the considered expansion?
\item How can we modify Prony's method such that we are able to optimally approximate a given (large) vector of function values in the Euclidean norm by a sparse exponential sum?
\end{enumerate}

To tackle  the first question, we introduce the operator based general Prony method and particularly apply it to study expansions of the form 
\begin{equation}\label{expGa}
f(x) = \sum_{j=1}^{M} c_{j} \, H(x) \, {\mathrm e}^{\alpha_{j}G(x)} , \qquad x \in [a, b] \subset {\mathbb{R}},
\end{equation}
where $c_{j}, \, \alpha_{j}\in {\mathbb{C}}$, $c_{j} \neq 0$,  $\alpha_{j}$ pairwise different, $G, H\in C^{\infty}(\mathbb{R})$ are  predefined functions, where  $G$ is strictly monotone on $[a,b]$, and $H$ is nonzero on $[a,b]$.  
This model covers many interesting examples as e.g.\ shifted Gaussians, generalized monomial sums and others.
For the expansions (\ref{expGa}) we will derive different sets of samples which are sufficient for the recovery of all  model parameters, thus answering the second question.
 
In regard to question 3 we will show for the case of $f$ as  in (\ref{exp}) and (\ref{expGa}), how the methods need to be modified for optimal approximation, and how to treat the case of noisy measurements.
\medskip

The outline of the paper is as follows. First we will introduce the idea of an operator based Prony method by looking at the recovery problem of the classical exponential sum from different angles.
In Section \ref{sec3}, we study  the recovery of the more general expansion $f$ of the form (\ref{expGa}). We will show that  (\ref{expGa}) can be viewed as  an expansion into eigenfunctions of a differential operator of first order and thus, according to the generalized Prony method in \cite{PP13}, can be recovered  using higher order derivative values of $f$.
We will show construct a new generalized shift operator which possesses the same eigenfunctions. This leads to a recovery method that requires only function values of $f$ instead of derivative values. The idea will be further illustrated with several examples in Subsection \ref{subsecexamp}.
Section \ref{sec4} is devoted to the numerical treatment of the generalized recovery method. We will derive an ESPRIT-like algorithm for the computation of all unknown parameters in the expansion (\ref{expGa}). This algorithm also applies if the  number of terms $M$ in the expansion (\ref{expGa}) is not given beforehand. Furthermore, we show in Section \ref{subsecpart}, how the recovery problem can be simplified if some frequencies $\alpha_{j}$,$j \in \{1,\dots,M\}$, are known beforehand (while the corresponding coefficients $c_{j}$ are unknown). In Section \ref{subsecnoneq}, we use a different interpretation of (\ref{expGa}) in order to
derive a new method to recover an exponential sum from non-equispaced functions samples.
Finally, in Section \ref{sec5} we study the optimal approximation with exponential sums in the Euclidean norm. This leads to a nonlinear least squares problem which we tackle directly using a Levenberg-Marquardt iteration.
Our approach is essentially different from earlier algorithms, as e.g. \cite{BM86,OS91,OS95,ZP19}.


%
\section{Operator Based View to Prony's Method}\label{sec2}
In order to tackle the questions 1 and 2 in Section \ref{subseccont}, we start by reconsidering  Prony's method.
As an introductory example, we study the exponential sum in (\ref{exp}) from a slightly different viewpoint.
For $h \in {\mathbb{R}}\setminus \{ 0 \}$ let $S_{h}: C^{\infty}(\mathbb{R}) \to C^{\infty}(\mathbb{R})$  be the shift operator given by $ S_{h} f \coloneqq f\left(\cdot + h\right)$. Then, for any  $\alpha \in \mathbb{C}$, the function ${\mathrm e}^{\alpha x }$  is an eigenfunction of $S_{h}$  with eigenvalue ${\mathrm e}^{\alpha h}$, i.e.,
$$ (S_{h} {\mathrm e}^{\alpha \cdot})(x) = {\mathrm e}^{\alpha(x+h)} = {\mathrm e}^{\alpha h}  \, {\mathrm e}^{\alpha x}. $$
Therefore, the exponential sum in (\ref{exp}) can be seen as a sparse expansion into eigenfunctions of the shift operator $S_{h}$.  The eigenvalues ${\mathrm e}^{\alpha_{j} h}$ are pairwise different, if we assume that $\textrm{Im} \, \alpha_{j} \in [-\pi/h, \, \pi/h )$. 
Now we consider the Prony polynomial 
$$ p(z) \coloneqq \prod_{j=1}^{M} \left(z-{\mathrm e}^{\alpha_{j} h}\right) = \sum_{k=0}^{M} p_{k} \, z^{k} $$
defined by the (unknown) eigenvalues ${\mathrm e}^{\alpha_{j}h}$ corresponding to the active eigenfunctions in the expansion $f$ as in (\ref{exp}).Then, for any predefined $x_{0} \in \mathbb{R}$ we have
\begin{align}\label{hh}
\sum_{k=0}^{M} p_{k} f\left(x_0 + h(k+m)\right) &= \sum_{k=0}^{M} p_{k} \left(S_{h}^{k+m} f\right)(x_{0}) = \sum_{k=0}^{M} p_{k} \sum_{j=1}^{M} c_{j}  \left(S_{h}^{k+m} {\mathrm e}^{\alpha_{j} \cdot}\right)(x_{0}) \notag  \\
&= \sum_{j=1}^{M} c_{j} \sum_{k=0}^{M} p_{k} {\mathrm e}^{\alpha_{j} (hm+hk)} \, {\mathrm e}^{\alpha_{j}x_{0}} \notag \\
&= \sum_{j=1}^{M} c_{j} {\mathrm e}^{\alpha_{j} hm} p\left({\mathrm e}^{\alpha_{j} h}\right) {\mathrm e}^{\alpha_{j}x_{0}} = 0,
\end{align}
i.e., we can reconstruct $p(z)$ by solving this homogeneous system for $m=0, \ldots , M-1$.
We conclude that the exponential sum in (\ref{exp}) can be recovered from the samples $f(h\ell+ x_{0})$, $\ell=0, \ldots , 2M-1$. This is a slight generalization of the original Prony method in section \ref{classicalpronysubsec} as we introduced  an arbitrary sampling distance $h \in \mathbb{R} \setminus\{0\}$ and a starting point $x_0 \in \mathbb{R}$. 

Moreover, we can also replace the samples $\left(S_{h}^{k+m} f\right)(x_{0})= f(h(k+m)+x_{0})$ in the above computation (\ref{hh}) by any other representation of the form $F\left(S_{h}^{k+m} f\right)$, where $F: C^{\infty}(\mathbb{R}) \to \mathbb{C}$ is a linear functional satisfying $F \left({\mathrm e}^{\alpha \cdot}\right) \neq 0$, since
$$ \sum_{k=0}^{M} p_{k} F\left(S_{h}^{k+m} f\right) = \sum_{k=0}^{M} p_{k} \sum_{j=1}^{M} c_{j}  F\left(S_{h}^{k+m} {\mathrm e}^{\alpha_{j} \cdot}\right)
= \sum_{j=1}^{M} c_{j} {\mathrm e}^{\alpha_{j} hm} p\left({\mathrm e}^{\alpha_{j} h}\right) \, F\left({\mathrm e}^{\alpha_{j} \cdot}\right) =0. $$
Any set of samples of the form $F(S_{h}^{\ell}f)$, $\ell=0, \ldots , 2M-1$, is sufficient to recover $f$ in (\ref{exp}), and the above set is obtained using the point  evaluation functional $F=F_{x_{0}}$ with $F_{x_{0}}f \coloneqq f(x_{0})$ for $x_0 \in \mathbb{R}$. 

This operator-based view leads us to the generalized Prony method introduced in \cite{PP13}, which can be applied to recover any sparse expansion into eigenfunctions of a linear operator. 

To illustrate this idea further, let us consider the differential operator $D: C^{\infty}(\mathbb{R}) \to C^{\infty }(\mathbb{R})$ with $(D f)(x) \coloneqq f'(x)$ with $f'$ denoting the first derivative of $f$. Due to 
$$ \left(D {\mathrm e}^{\alpha \cdot}\right)(x) = \alpha \, {\mathrm e}^{\alpha x} $$
we observe that exponentials ${\mathrm e}^{\alpha x}$ are eigenfunctions of $D$ for any $\alpha \in \mathbb{C}$.
Thus, the sum of exponentials in (\ref{exp}) can also be seen as a sparse expansion into eigenfunctions of the  differential operator $D$.
Similarly as before let 
$$ \widetilde{p}(z) \coloneqq \prod_{j=1}^{M} \left(z-\alpha_{j}\right) = \sum_{k=0}^{M} \widetilde{p}_{k} \, z^{k}$$ 
be the characteristic polynomial being defined by the eigenvalues $\alpha_{j}$ corresponding to the ``active'' eigenfunctions of $D$ in (\ref{exp}), where again $\widetilde{p}_{M} =1$ holds. Choosing the functional $F f \coloneqq f(x_{0})$ for some fixed $x_{0} \in \mathbb{R}$, we find
\begin{eqnarray*}
\sum_{k=0}^{M} \widetilde{p}_{k} F\left(D^{k+m} f\right) &=& \sum_{k=0}^{M} \widetilde{p}_{k} f^{(k+m)}(x_{0}) = \sum_{k=0}^{M} \widetilde{p}_{k} \sum_{j=1}^{M} c_{j} \alpha_{j}^{k+m} \, {\mathrm e}^{\alpha_{j} x_{0}}  \\
&=& \sum_{j=1}^{M} c_{j} \alpha_{j}^{m} \, \widetilde{p}\left(\alpha_{j}\right) \, {\mathrm e}^{\alpha_{j} x_{0}} =0.
\end{eqnarray*}
Thus we can determine $\widetilde{p}_j$,$j=1,\dots,M$, from the homogeneous system \\
$\sum_{k=0}^{M} \widetilde{p}_{k} f^{(k+m)}(x_{0}) =0$ for $m=0, \ldots , M-1$ and $\widetilde{p}_{M}=1$, and recover the zeros $\alpha_{j}$ of $\widetilde{p}$ in a first step. The $c_{j}$ are computed in a second step the same way as in the classical case.
We conclude that also the sample set $f^{(\ell)}(x_{0})$, $\ell=0, \ldots, 2M-1$, for any fixed value $x_{0} \in {\mathbb{R}}$, is sufficient to recover $f$. WE note that here we do not have any restrictions in regards of $\textrm{Im}\, \alpha_{j}$. 

This example already shows, that there exist many different sample sets that may be used to recover the exponential sum. In particular, each set of the form 
$F\left(A^{\ell} h\right)$, $\ell=0, \ldots , 2M-1$, where $F$ is an arbitrary (fixed) linear functional 
satisfying $F \left({\mathrm e}^{\alpha \cdot }\right)  \neq 0$ and $A: C^{\infty}(\mathbb{R}) \to C^{\infty}(\mathbb{R})$ being a linear operator with eigenfunctions ${\mathrm e}^{\alpha x}$ corresponding to pairwise different eigenvalues (at least for the range of $\alpha$ covering the $\alpha_{j}$ in (\ref{exp})) can be employed for recovery. 
\medskip

However, in practice it is usually much easier to obtain function samples of the form $f(x_{0} + h\ell)$ than higher order derivative values $f^{\ell}(x_{0})$ for $\ell =0,\dots, 2M-1$. 
Therefore, for more general expansions, for example of the form (\ref{expGa}), we will raise the following question which has also been investigated in \cite{SP19}: Suppose we already found a set of samples which is (theoretically) sufficient to recover the expansion at hand. Is it possible to find other sets of samples which can be more easily acquired and also admit a unique recovery of the sparse expansion?  
In terms of linear operators, we can reformulate this idea: Suppose we have already found an operator $A$, such that a considered expansion $f$ is a sparse expansion into $M$ eigenfunctions of $A$ (to pairweise different eigenvalues). Is it possible to find another operator $B$ that possesses the same eigenfunctions as $A$, such that the samples $\widetilde{F}(B^{\ell})f$ (with some suitable linear functional $\widetilde{F}$) can be simpler obtained than $F(A^{\ell})f$ for $\ell=0, \ldots , 2M-1$? 
\medskip

In our introductory example for the exponential sum (\ref{exp}), let the linear functional $F$ be given as $Ff \coloneqq f(0)$. 
Assume that we had found the recovery of (\ref{exp}) from the samples $f^{(\ell)} (0)$, $\ell=0, \ldots , 2M-1$ first. This sampling set corresponds   to the linear differential operator $A=D$ with $Df = f'$. 
How can we find the shift operator $B=S_{h}$, knowing just the fact,  that (\ref{exp}) can  be viewed as a sparse expansion into eigenfunctions of $D$?
Is there a simple link between 
the linear differential operator $D$ and the shift operator $S_{h}$?

This is indeed the case. Taking  $\varphi \in C^{\infty}(\mathbb{R})$ with $\varphi(x) = {\mathrm e}^{hx}$, and applying $\varphi$ (formally) to $D$. We observe for each exponential ${\mathrm e}^{\alpha x}$, $\alpha \in \mathbb{C}$, 
$$ \varphi(D) {\mathrm e}^{\alpha \cdot} = {\mathrm e}^{hD} {\mathrm e}^{\alpha \cdot} =
\sum_{\ell=0}^{\infty} \frac{h^{\ell}}{\ell !} \, D^{\ell} {\mathrm e}^{\alpha \cdot} =
\left( \sum_{\ell=0}^{\infty} \frac{h^{\ell}}{\ell !} \, \alpha^{\ell} \right) {\mathrm e}^{\alpha \cdot} = {\mathrm e}^{\alpha h} \, {\mathrm e}^{\alpha \cdot} = S_{h} {\mathrm e}^{\alpha \cdot}.
$$
Therefore, we also have $ \varphi(D) f = S_{h}f$ for $f$ in (\ref{exp}). We note that $\varphi$ also maps the eigenvalues of the differential operator onto the eigenvalues of the shift operator.  
We will use the idea to switch from differential operators to other more suitable operators in the next section in order to recover general sparse expansion

\section{Recovery of Generalized Exponential Sums}\label{sec3}

In this section we focus on the recovery of more general sparse expansions. 
Let $G: \mathbb{R} \to  \mathbb{C}$ be a given function in $C^{\infty}(\mathbb{R})$, which is strictly monotone in a given interval $[a,b] \subset \mathbb{R}$, and let $H(x): \mathbb{R} \to  \mathbb{C}$ be 
in $C^{\infty}(\mathbb{R})$ and nonzero in $[a,b]$. We consider expansions of the form 
\begin{equation}\label{expGH} f(x) = \sum_{j=1}^{M} c_{j} \, H(x) \, {\mathrm e}^{\alpha_{j} G(x)} , \qquad x \in [a,b] \subset \mathbb{R}, 
\end{equation}
with $c_{j} \in \mathbb{C}\setminus \{ 0 \}$ and pairwise different $\alpha_{j} \in \mathbb{C}$. Obviously,  (\ref{exp}) is a special case of (\ref{expGH}) with $G(x) = x$ and $H(x) \equiv 1$. In order to recover $f$, we need to identify the parameters $c_{j}$ and $\alpha_{j}$, $j=1, \ldots , M$.

\subsection{Expansion into Eigenfunctions of a Linear Differential Operator} \label{subsecdiffop}
According to our previous considerations in Section \ref{sec2}, we want to apply the so-called generalized Prony method introduced in  \cite{PP13}, where  we view  (\ref{expGH}) as an expansion into eigenfunctions of a linear operator. 
\medskip

{\bf Step 1.} First we need to find a linear operator $A$ that possesses the functions $H(x) {\mathrm e}^{\alpha_{j} G(x)}$ as eigenfunctions for any $\alpha_{j} \in \mathbb{C}$. For this purpose, let us define the functions 
\begin{equation}\label{gh} g(x) \coloneqq \frac{1}{G'(x)}, \qquad h(x) \coloneqq - g(x) \frac{H'(x)}{H(x)} = - \frac{H'(x)}{G'(x) H(x)},
\end{equation}
which are well defined on $[a,b]$, since $G'$ and $H$ have no zeros in $[a,b]$.  Then the operator $A: C^{\infty}(\mathbb{R}) \to C^{\infty}(\mathbb{R})$ with 
\begin{equation}\label{diff}
A f (x):  = g(x) f'(x) + h(x) f(x)
\end{equation}
 satisfies 
\begin{eqnarray*} 
 A \left( H(\cdot) {\mathrm e}^{\alpha_{j} G(\cdot)} \right) (x) &=&
g(x) \left( \alpha_{j} G'(x) H(x)  + 
H'(x) \right) {\mathrm e}^{\alpha_{j} G(x)}  + h(x) \, H(x) \, {\mathrm e}^{\alpha_{j} G(x)} \\
&=&  \alpha_{j} \, H(x) \, {\mathrm e}^{\alpha_{j} G(x)}, \,\,\, ~~~~ \alpha_j \in \mathbb{C}
\end{eqnarray*}
i.e., the differential operator $A$ indeed possesses the eigenfunctions $H(x) \, {\mathrm e}^{\alpha_{j} G(x)}$ with corresponding eigenvalues $\alpha_{j}$.
\medskip

{\bf Step 2.} To reconstruct $f$, we can apply a similar procedure as in Section \ref{sec2}. Let 
\begin{equation}\label{widetilde}
\widetilde{p}(z) \coloneqq \prod_{j=1}^{M} (z-\alpha_{j}) = \sum_{k=0}^{M} \widetilde{p}_{k} \, z^{k}, \qquad \widetilde{p}_{M} = 1, 
\end{equation}
be the characteristic polynomial defined by the (unknown) eigenvalues $\alpha_{j}$ that correspond to the active eigenfunctions of the operator $A$ as in (\ref{diff}). Let $F \colon C^{\infty}(\mathbb{R}) \to {\mathbb{C}}$ be the point evaluation functional $F f\coloneqq f(x_0)$ with $x_0 \in [a,b]$, such that $H(x_0) \neq 0$ and $G'(x_0) \neq 0$. Then we observe (\ref{expGH}) yields
\begin{eqnarray*}
\sum_{k=0}^{M} \widetilde{p}_{k} \, F(A^{m+k} f) &=& \sum_{k=0}^{M} \widetilde{p}_{k} \, \sum_{j=1}^{M} c_{j} \, F \left( A^{k+m} \left( H( \cdot) \, {\mathrm e}^{\alpha_{j} G(\cdot)} \right) \right)\\
 &=& \sum_{k=0}^{M} \widetilde{p}_{k} \, \sum_{j=1}^{M} c_{j} \, \alpha_{j}^{k+m} \, F\left( H(\cdot) \, {\mathrm e}^{\alpha_{j} G(\cdot)} \right)\\
&=& \sum_{j=1}^{M} c_{j}  \, \alpha_{j}^{m} \left( \sum_{k=0}^{M} \widetilde{p}_{k}\,  \alpha_{j}^{k} \right) \left(H(x_0) \, {\mathrm e}^{\alpha_{j} G(x_0)}\right) = 0
\end{eqnarray*}
for all integers $m \ge 0$. Thus we can compute the coefficients $\widetilde{p}_{k}$, $k=0, \ldots, M-1$, using the values $F(A^{\ell}f)$, $\ell=0, \ldots, 2M-1$. Having determined the polynomial $\widetilde{p}(z)$, we can compute its zeros $\alpha_{j}$, and afterwards solve a linear equation system to reconstruct the complex coefficients $c_{j}$ in (\ref{expGH}). 

However, the question remains, how to obtain the needed data $F(A^{\ell}f)$, $\ell=0, \ldots, 2M-1$.
We obtain 
\begin{align}\label{a1}
F(A^{0}f) &= f(x_0), \\ \nonumber
F(A^{1}f)&= g(x_0)f'x_0) +h(x_0)f(x_0),\\ \nonumber
F(A^{2}f) &= g(x_0)^{2} f''(x_0) + [g(x_0)g'(x_0) + 2g(x_0)h(x_0)]f'(x_0)\\ \nonumber
&+[g(x_0)h'(x_0)+h(x_0)^{2}] f(x_0).  \nonumber
\end{align}
Since $g$ and $h$ (and their derivatives) are known beforehand, it is sufficient to provide the first $2M$ derivative values  of $f$ at one point $x_0 \in [a,b]$ in order to reconstruct $f$. Therefore we can conclude
\begin{theorem}\label{theo1}
Let $G, \, H \in C^{\infty}([a,b])$, such that $G'$ and $H$ have no zeros on $[a,b]$, and let $x_0 \in [a,b]$ be fixed.
Then   $f$ in $(\ref{expGH})$ can be viewed as an expansion into eigenfunctions of the differential operator $A$ as in (\ref{diff}), and 
 $f$ as in $(\ref{expGH})$ can be uniquely reconstructed from the derivative samples $f^{(\ell)}(x_0)$, $\ell=0, \ldots , 2M-1$.  
\end{theorem}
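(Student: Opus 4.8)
The plan is to reduce the statement to the generalized Prony method already sketched in Step~2, with the single additional ingredient that the operator samples $F(A^{\ell}f)$ and the derivative samples $f^{(\ell)}(x_0)$ carry exactly the same information. The eigenfunction property established in Step~1, namely $A\bigl(H(\cdot)\,{\mathrm e}^{\alpha_j G(\cdot)}\bigr) = \alpha_j\, H(\cdot)\,{\mathrm e}^{\alpha_j G(\cdot)}$, already settles the first assertion of the theorem: $f$ in (\ref{expGH}) is a sparse combination of $M$ eigenfunctions of $A$ to the pairwise distinct eigenvalues $\alpha_j$. It remains to prove that the $2M$ derivative values determine the $c_j$ and $\alpha_j$ uniquely.

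First I would make precise the linear map between the two sample families. Expanding $A^{\ell}$ by (\ref{diff}) and iterating, one sees that $F(A^{\ell}f)$ is a linear combination of $f(x_0), f'(x_0), \ldots, f^{(\ell)}(x_0)$ whose coefficients are fixed polynomial expressions in $g$, $h$ and their derivatives evaluated at $x_0$; this is exactly the pattern displayed in (\ref{a1}). The crucial feature is that the coefficient of the top-order term $f^{(\ell)}(x_0)$ equals $g(x_0)^{\ell}$. Since $g(x_0) = 1/G'(x_0)$ and $G'(x_0) \neq 0$, this coefficient is nonzero, so the matrix relating the vectors $\bigl(F(A^{\ell}f)\bigr)_{\ell=0}^{2M-1}$ and $\bigl(f^{(\ell)}(x_0)\bigr)_{\ell=0}^{2M-1}$ is lower triangular with nonzero diagonal, hence invertible. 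Consequently, knowing the derivative samples is equivalent to knowing the operator samples $F(A^{\ell}f)$, $\ell=0,\ldots,2M-1$.

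Next I would invoke the computation of Step~2, which shows $F(A^{\ell}f) = \sum_{j=1}^{M} \widetilde{c}_j\, \alpha_j^{\ell}$ with $\widetilde{c}_j \coloneqq c_j\, H(x_0)\,{\mathrm e}^{\alpha_j G(x_0)}$, and that the coefficients $\widetilde{p}_k$ of $\widetilde{p}$ in (\ref{widetilde}) solve the Hankel system $\sum_{k=0}^{M-1}\widetilde{p}_k\, F(A^{k+m}f) = -F(A^{M+m}f)$ for $m=0,\ldots,M-1$. To guarantee that this system has a unique solution, I would factor the coefficient matrix as $V\, D\, V^{\mathsf T}$, where $V = (\alpha_j^{m})_{m,j}$ is the $M\times M$ Vandermonde matrix built from the nodes $\alpha_j$ and $D = \operatorname{diag}(\widetilde{c}_1,\ldots,\widetilde{c}_M)$. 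Pairwise distinctness of the $\alpha_j$ makes $V$ invertible, and the hypotheses $H(x_0)\neq 0$, $c_j\neq 0$ ensure every $\widetilde{c}_j \neq 0$, so $D$ is invertible; hence the Hankel matrix is invertible and $\widetilde{p}$ is uniquely determined. Its zeros return the $\alpha_j$, and a final (now square, invertible) Vandermonde system $\sum_{j=1}^{M}\widetilde{c}_j\, \alpha_j^{\ell} = F(A^{\ell}f)$, $\ell=0,\ldots,M-1$, recovers the $\widetilde{c}_j$ and thus $c_j = \widetilde{c}_j / \bigl(H(x_0)\,{\mathrm e}^{\alpha_j G(x_0)}\bigr)$.

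The only genuinely delicate point is the invertibility of the Hankel matrix, which is where all the structural hypotheses enter; everything else is bookkeeping. I expect the main obstacle to be verifying that the effective coefficients $\widetilde{c}_j$ cannot vanish --- precisely the reason the theorem requires $x_0$ to satisfy both $H(x_0)\neq 0$ and $G'(x_0)\neq 0$ --- together with checking that the triangular transition matrix between derivative and operator samples is genuinely invertible over the full index range $\ell=0,\ldots,2M-1$.
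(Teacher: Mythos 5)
Your proposal is correct and takes essentially the same route as the paper's proof: view $f$ as a sparse eigenfunction expansion of $A$, convert the derivative samples $f^{(\ell)}(x_0)$ into the operator samples $F(A^{\ell}f)$ through a triangular transition matrix (the paper's matrix ${\mathbf L}$, built by the same recursion you describe), and then run the generalized Prony argument to get $\widetilde{p}$, its zeros $\alpha_j$, and finally the $c_j$. You are in fact slightly more explicit than the paper at two points --- identifying the diagonal entries $g(x_0)^{\ell}\neq 0$ of the transition matrix, and proving unique solvability of the Hankel system via the $V\,D\,V^{\mathsf T}$ factorization, which the paper only records as (\ref{fak}) in the proof of Theorem~\ref{theo2} --- but these are refinements of the same argument, not a different one.
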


\begin{proof}
As seen from the above computations the operator $A$ of the form (\ref{diff}) indeed possesses the eigenfunctions $H(x) \, {\mathrm e}^{\alpha_{j} G(x)}$. 
In order to reconstruct the parameters $\alpha_{j}$, we first have to compute the required values $F(A^{\ell}f) = (A^{\ell} f)(x_0)$, $\ell=0, \ldots , 2M-1$. For this purpose, we need to determine the triangular matrix ${\mathbf L} = (\lambda_{m,\ell})_{m,\ell=0}^{2M-1} \in \mathbb{R}^{2M \times 2M}$ such that 
$$ \left(F(A^{\ell}f) \right)_{\ell=0}^{2M-1} = \left((A^{\ell}f)(x_0) \right)_{\ell=0}^{2M-1} = {\mathbf L}\, \left( f^{(\ell)}(x_0) \right)_{\ell=0}^{2M-1}. $$
As seen in (\ref{a1}), we have already $\lambda_{0,0} \coloneqq 1$, $\lambda_{1,0} \coloneqq g(x_0), \, \lambda_{1,1} \coloneqq h(x_0)$.
In order to obtain the entries of ${\mathbf L}$, we have to consider the elements $\lambda_{m,\ell}$ as functions in $x$, starting with $\lambda_{0,0}(x) \equiv 1$. 
Induction and 
$$A^{\ell}f(x) = \sum_{r=0}^{\ell} \lambda_{\ell,r}(x) \, f^{(r)}(x)$$  
yield 
\begin{eqnarray*} A^{\ell+1} f(x)  &=& \sum_{r=0}^{\ell} g(x) \left( \lambda_{\ell,r}'(x) \, f^{(r)}(x) + \lambda_{\ell,r}(x) f^{(r+1)}(x) \right) + h(x) \, \lambda_{\ell,r}(x) \, f^{(r)}(x) \\ 
&=& \sum_{r=0}^{\ell} \left( g(x) \, \lambda_{\ell,r}'(x) + h(x) \, \lambda_{\ell,r}(x)\right) \, f^{(r)}(x) + g(x) \, \lambda_{\ell,r}(x) f^{(r+1)}(x).
\end{eqnarray*}
We can conclude the recursion 
$$ \lambda_{\ell+1,r}(x) \coloneqq \left\{ \begin{array}{ll} 
g(x) \, \lambda_{\ell,r}'(x) + h(x) \, \lambda_{\ell,r}(x) & \, ~~~~~ r=0, \ldots , \ell,\\
g(x) \, \lambda_{\ell,r}(x)  & \, ~~~~~   r=\ell+1.
\end{array} \right.
$$
The matrix entries $\lambda_{\ell,k}\coloneqq \lambda_{\ell,k}(x_0)$ are well-defined by assumption on $H$ and $G$.
In a second step, we solve the homogeneous equation system 
$$ \sum_{k=0}^{M} \widetilde{p}_{k} \, F(A^{k+m}f) =0, \qquad m=0, \ldots , M-1. $$
Then we can determine the characteristic polynomial $\widetilde{p}$ in (\ref{widetilde}) and extract its zeros $\alpha_{j}$. Finally, the coefficients $c_{j}$
can be computed from the linear system
$$  F(A^{\ell}f) = (A^{\ell}f)(x_0) = \sum_{j=1}^{M} c_{j} \, (A^{\ell} (H(\cdot) \, {\mathrm e}^{\alpha_{j} G(\cdot)}))(x_0) 
= H(x_0) \, \sum_{j=1}^{M} c_{j} \alpha_{j}^{\ell} {\mathrm e}^{\alpha_{j} G(x_0)} $$
for $\ell=0, \ldots , 2M-1$. \hfill \qed
\end{proof} 
However, the values $f^{(r)}(x_0)$, $r=0, \ldots , 2M-1$,  may not be easily accessible , and we require some extra effort to compute $F(A^{\ell}f)$.

\subsection{Expansion into Eigenfunctions of a Generalized Shift Operator}\label{subsecshift}
Our goal is to find a different set of sample values for the reconstruction of $f$ as in   (\ref{expGH}), which is easier to obtain but also sufficient for a unique reconstruction. Thus we need to find an operator $B$ which possesses the same eigenfunction as $A$ as in (\ref{diff}). In addition $B$ should satisfy that $F(B^{\ell}f)$ (with some point evaluation functions $F$) can be obtained from function values of $f$. Similarly as in Section \ref{sec2}, we consider the linear operator $B=\varphi(A) = \exp(hA)$ with $A$ as in (\ref{diff})  and $h \in {\mathbb{R}} \setminus \{ 0 \}$. We observe  for $f$ in (\ref{expGH}), 
\begin{eqnarray*}
\exp(h A) f &=& \sum_{\ell=0}^{\infty} \frac{h^{\ell}}{\ell!} \, A^{\ell} f = \sum_{\ell=0}^{\infty} \frac{h^{\ell}}{\ell!} \sum_{j=1}^{M} c_{j} \,  A^{\ell} \left( {H(\cdot)} {\mathrm e}^{\alpha_{j} G(\cdot)}\right) \\
&=& \sum_{\ell=0}^{\infty} \frac{h^{\ell}}{\ell!} \sum_{j=1}^{M} c_{j} \alpha_{j}^{\ell} \left(H(\cdot) \, {\mathrm e}^{\alpha_{j} G(\cdot)} \right)
= \sum_{j=1}^{M} c_{j} \left( \sum_{\ell=0}^{\infty} \frac{h^{\ell}}{\ell!}  \alpha_{j}^{\ell} \right) \left(H(\cdot) \, {\mathrm e}^{\alpha_{j} G(\cdot)}\right)\\
&=& \sum_{j=1}^{M} c_{j} \, {\mathrm e}^{\alpha_{j}h} \left(H(\cdot) \, {\mathrm e}^{\alpha_{j} G(\cdot)}\right) 
= H(\cdot) \, \sum_{j=1}^{M} c_{j} \, {\mathrm e}^{\alpha_{j} (h +  G(\cdot))} \\
&=& H(\cdot) \,  \sum_{j=1}^{M} c_{j} \, {\mathrm e}^{\alpha_{j} G( G^{-1} (h +  G(\cdot)))}\\
&=& \frac{H(\cdot)}{H(G^{-1}(h+G(\cdot)))}  \,  \sum_{j=1}^{M} c_{j}  H(G^{-1}(h+G(\cdot))) \, {\mathrm e}^{\alpha_{j} G( G^{-1} (h +  G(\cdot)))}\\
&=& \frac{H(\cdot)}{H(G^{-1}(h+G(\cdot)))}  \, f\left(  G^{-1} (h +  G(\cdot))\right).
\end{eqnarray*}
We therefore we define the generalized shift operator 
\begin{equation}
\label{shift}
S_{H,G,h}  f (x) \coloneqq  \frac{H(x)}{H(G^{-1}(h+G(x)))} f\left(  G^{-1} (h +  G(x))\right)
\end{equation}
with functions $G, \, H$, and the step size $h \in {\mathbb{R}} \setminus \{ 0 \}$.
This shift operator has also been introduced in \cite{PSK19}. In particular it satisfies the properties
$$ S_{H,G,h_{2}} \left( S_{H,G,h_{1}}  f \right) = S_{H,G,h_{1}} \left( S_{H,G,h_{2}}  f \right) = S_{H,G,h_{1}+h_{2}} f $$
for all $h_{1}, h_{2} \in \mathbb{R}$, and 
\begin{equation}\label{shift1}
S_{H,G,h}^{k} f = S_{H,G,kh} f
\end{equation}
for $k \in \mathbb{Z}$, see Theorem 2.1 in \cite{PSK19}. Observe that here we do not to assume that $G$ and $H$ are $C^{\infty}(\mathbb{R})$ functions, and it is sufficient to consider continuous functions.  We only need to ensure the existence of $G^{-1}$ and $1/H$ within the considered sampling interval.
We summarize this in the following theorem. 

\begin{theorem} \label{theo2}
Let $G, \, H$ be continuous functions on an interval $[a,b]$, such that $G$ is strictly monotone in $[a,b]$  and $H$ has no zeros in $[a,b]$. Assume that the pairwise different parameters $\alpha_{j}$ in the expansion 
\begin{equation}\label{f2} f(x) = \sum_{j=1}^{M} c_{j} \, H(x) \, {\mathrm e}^{\alpha_{j} G(x)} , \qquad x \in [a,b] \subset \mathbb{R}, 
\end{equation}
 satisfy $\mathrm{Im} \, \alpha_{j} \in (-T,T]  $ and that $c_{j} \in {\mathbb{C}}\setminus \{ 0 \}$. Then $f$  can be uniquely reconstructed from the sample values $f(G^{-1}(h\ell + G(x_0)))$, $\ell =0, \ldots , 2M-1$, where $h$ is chosen such that $0 < |h| < \min \left\{\frac{\pi}{T}, \frac{|G(b)-G(a)|}{2M}\right\}$ and $\rm{sign} \,  h = \rm{sign}\,  (G(b)-G(a))$ and $x_0 \in \mathbb{R}$. 
\end{theorem}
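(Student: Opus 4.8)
The plan is to reduce the reconstruction of $f$ to a classical Prony problem for a transformed sample sequence, using the generalized shift operator $S_{H,G,h}$ from (\ref{shift}), whose eigenfunctions are exactly the summands of $f$. Recall from the computation preceding the theorem that $S_{H,G,h}\!\left(H(\cdot)\,{\mathrm e}^{\alpha_j G(\cdot)}\right) = {\mathrm e}^{\alpha_j h}\,H(\cdot)\,{\mathrm e}^{\alpha_j G(\cdot)}$, so the functions $H(x)\,{\mathrm e}^{\alpha_j G(x)}$ are eigenfunctions of $B \coloneqq S_{H,G,h}$ with eigenvalues $w_j \coloneqq {\mathrm e}^{\alpha_j h}$. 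Together with the semigroup law (\ref{shift1}), which gives $B^\ell f = S_{H,G,\ell h} f$, and the point evaluation functional $F f \coloneqq f(x_0)$, this places us exactly in the operator-based setting of Section \ref{sec2}, so a generalized Prony argument applies; the task is to show the two constraints on $h$ make it work.

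I would first establish two preliminary facts. For \emph{well-definedness} of the samples: since $G$ is strictly monotone on $[a,b]$, its range is the interval with endpoints $G(a),G(b)$, where $G^{-1}$ exists; choosing $x_0$ so that $G(x_0)$ sits at the appropriate endpoint and using $\mathrm{sign}\,h = \mathrm{sign}(G(b)-G(a))$, the arguments $h\ell + G(x_0)$ move into the interior, and the bound $|h| < |G(b)-G(a)|/(2M)$ gives $|h|(2M-1) < |G(b)-G(a)|$, keeping all $2M$ warped points inside $[a,b]$. For \emph{distinct eigenvalues}: since $h$ is real, $w_j = w_k$ forces $\mathrm{Re}\,\alpha_j = \mathrm{Re}\,\alpha_k$ and $h\,(\mathrm{Im}\,\alpha_j - \mathrm{Im}\,\alpha_k) \in 2\pi\mathbb{Z}$; from $\mathrm{Im}\,\alpha_j \in (-T,T]$ we get $|\mathrm{Im}\,\alpha_j - \mathrm{Im}\,\alpha_k| < 2T$, and $|h| < \pi/T$ yields $|h\,(\mathrm{Im}\,\alpha_j - \mathrm{Im}\,\alpha_k)| < 2\pi$, so the only admissible multiple of $2\pi$ is $0$, forcing $\alpha_j = \alpha_k$ and contradicting pairwise distinctness.

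Next I would carry out the recovery. Using $B^\ell f = S_{H,G,\ell h}f$ and definition (\ref{shift}), the quantities
$$ s_\ell \coloneqq \frac{H(x_0)}{H\!\left(G^{-1}(h\ell + G(x_0))\right)}\, f\!\left(G^{-1}(h\ell + G(x_0))\right) = F(B^\ell f), \qquad \ell = 0, \ldots, 2M-1, $$
are computable from the prescribed function values, since $G$, $G^{-1}$, $H$ are known and $H$ is nonzero on $[a,b]$. The eigenvalue relation gives $s_\ell = \sum_{j=1}^M \tilde c_j\, w_j^{\ell}$ with $\tilde c_j \coloneqq H(x_0)\,c_j\,{\mathrm e}^{\alpha_j G(x_0)} \neq 0$, a classical exponential sum in $\ell$ with pairwise distinct nodes $w_j$ and nonzero coefficients. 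Hence the classical Prony method of Section \ref{classicalpronysubsec} recovers all $w_j$ and $\tilde c_j$ from the $2M$ values $s_0,\dots,s_{2M-1}$. I then set $\alpha_j = h^{-1}\,\mathrm{Log}\,w_j$ with the principal logarithm: because $|h| < \pi/T$ gives $\mathrm{Im}(\alpha_j h) \in (-\pi,\pi]$, the map $\alpha_j \mapsto w_j$ is injective on the admissible strip and this determines $\alpha_j$ uniquely; finally $c_j = \tilde c_j / \left(H(x_0)\,{\mathrm e}^{\alpha_j G(x_0)}\right)$ recovers the coefficients.

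The essential difficulty lies entirely in the twin roles of the two bounds on $h$, since everything else is the classical machinery once $B$ is in hand. The bound $|h| < \pi/T$ does double duty, guaranteeing both the distinctness of the Prony nodes $w_j$ and the unambiguous logarithmic branch needed to pass back from $w_j$ to $\alpha_j$, while $|h| < |G(b)-G(a)|/(2M)$ with the sign condition is precisely what confines the $2M$ warped sample points to $[a,b]$. Checking that these two numerical constraints simultaneously deliver distinctness, domain-validity, and logarithmic invertibility is the step demanding care; the remaining expansions and the invocation of classical Prony are routine bookkeeping.
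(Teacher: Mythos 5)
Your proposal is correct and follows essentially the same route as the paper: the eigenfunction property of $S_{H,G,h}$, the bound $|h|<\pi/T$ for pairwise distinct eigenvalues ${\mathrm e}^{\alpha_j h}$, the sign and magnitude conditions on $h$ to keep all warped sample points in $[a,b]$, and a Prony/Vandermonde argument on the divided samples $f/H$ evaluated at $G^{-1}(h\ell+G(x_0))$ — your reduction of these samples to a classical exponential sum $s_\ell=\sum_j \tilde c_j w_j^\ell$ is exactly the content of the paper's factorization (\ref{fak}) and linear system (\ref{lini}). Your explicit treatment of the principal-logarithm branch when passing from $w_j$ back to $\alpha_j$ is a detail the paper leaves implicit, but it does not change the method.
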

\begin{proof} 
From the arguments above, we can conclude that $H(x) \, {\mathrm e}^{\alpha_{j} G(x)}$ is an eigenfunction of the generalized shift operator $S_{H,G,h}$ as  in (\ref{shift}) with the eigenvalue ${\mathrm e}^{\alpha_{j}h}$ ($\alpha_{j} \in \mathbb{C}$), since
\begin{eqnarray*}
S_{H,G,h} \left (H(\cdot)  {\mathrm e}^{\alpha_{j} G(\cdot)}\right) \!
&=& \!\frac{H(\cdot)}{H(G^{-1}(h+G(\cdot)))} \left( H(G^{-1}(h+G(\cdot)))  {\mathrm e}^{\alpha_{j} G(G^{-1}(h+G(\cdot)))}\!\right) \\
&=& H(\cdot) \, {\mathrm e}^{\alpha_{j}(h+G(\cdot))} = {\mathrm e}^{\alpha_{j}h} \, H(\cdot) \, {\mathrm e}^{\alpha_{j}G(\cdot)}
\end{eqnarray*}
holds.
Further, for $\textrm{Im} \, \alpha_{j} \in (-T,T]$, and $0 < |h| < \frac{\pi}{T}$, the eigenvalues ${\mathrm e}^{\alpha_{j}h}$ corresponding to active eigenfunctions in (\ref{expGH}) are pairwise different, such that we can uniquely derive the ``active'' eigenfunctions $H(x) {\mathrm e}^{\alpha_{j} G(x)}$ in (\ref{f2}) from the corresponding ``active'' eigenvalues.
We define the chacteristic polynomial 
\begin{equation}\label{pro}
p(z) \coloneqq \prod_{j=1}^{M} (z - {\mathrm e}^{\alpha_{j} h } ) = \sum_{k=0}^{M} p_{k} \, z^{k} \quad \textrm {with} \quad p_{M} = 1,
\end{equation}
using the (unknown) eigenvalues ${\mathrm e}^{\alpha_{j}h}$, where $p_{k}$, $k=0, \ldots , M-1$,  are the (unknown) coefficients of the monomial representation of $p(z)$. 
Then, we conclude
\begin{eqnarray}\nonumber
\sum_{k=0}^{M} p_{k} \,  (S_{H,G,h}^{k+m}f)(x_0) &=& \sum_{k=0}^{M} p_{k} \sum_{j=1}^{M} c_{j} \, (S_{H,G,h}^{k+m} H(\cdot) \, {\mathrm e}^{\alpha_{j} G(\cdot)})(x_0) \\
\nonumber
&=& \sum_{k=0}^{M} p_{k} \sum_{j=1}^{M} c_{j} \, {\mathrm e}^{\alpha_{j}h (k+m)} H(x_0) \, {\mathrm e}^{\alpha_{j} G(x_0)} \\
\nonumber
&=& H(x_0) \sum_{j=1}^{M} c_{j} \, {\mathrm e}^{\alpha_{j}hm} \,  {\mathrm e}^{\alpha_{j} G(x_0)} \sum_{k=0}^{M} p_{k} \, ({\mathrm e}^{\alpha_{j}h})^{k} \\
\label{p0}
&=& H(x_0) \sum_{j=1}^{M} c_{j} \, {\mathrm e}^{\alpha_{j}hm} \,  {\mathrm e}^{\alpha_{j} G(x_0)} \, p({\mathrm e}^{\alpha_{j}h}) = 0
\end{eqnarray}
for all integers $m$, where by definition
$$ (S_{H,G,h}^{k+m}f)(x_0) = \frac{H(x_0)}{H(G^{-1}(h(k+m) + G(x_0)))} f(G^{-1}(h(k+m)+G(x_0))). $$ 
Thus, we can compute the coefficients from the homogeneous linear system  $p_{k}$, $k=0, \ldots , M-1$ 
$$ 
\sum_{k=0}^{M} p_{k} \, (S_{H,G,h}^{k+m}f)(x_0) = H(x_0) \sum_{k=0}^{M} p_{k} \, 
\frac{f(G^{-1}(h(k+m)+G(x_0)))}{H(G^{-1}(h(k+m)+G(x_0)))} = 0,   $$
for $m=0, \ldots , M-1$, and $p_{M} = 1$, or equivalently from 
\begin{equation}\label{lini} \sum_{k=0}^{M-1} p_{k} \, 
\frac{f(G^{-1}(h(k+m)+G(x_0)))}{H(G^{-1}(h(k+m)+G(x_0)))} = - \frac{f(G^{-1}(h(M+m)+G(x_0)))}{H(G^{-1}(h(M+m)+G(x_0))}, 
\end{equation}
for $ m=0, \ldots , M-1$. The conditions on $h$ in the theorem ensure that we only use samples of $f$ in $[a,b]$.
The equation system (\ref{lini}) is always uniquely solvable, since the coefficient matrix is invertible. More exactly, we have for $f$ in (\ref{f2}),
\begin{eqnarray} \nonumber
&& \left( \frac{f(G^{-1}(h(k+m)+G(x_0)))}{H(G^{-1}(h(k+m)+G(x_0)))} \right)_{m,k=0}^{M-1} = \left( \sum_{j=1}^{M} c_{j} \, {\mathrm e}^{\alpha_{j}(h(k+m)+G(x_0))) } \right)_{m,k=0}^{M-1} \\
\label{fak}
&=& \left( {\mathrm e}^{\alpha_{j}hm} \right)_{m=0,j=1}^{M-1,M} \, \textrm{diag} \left( c_{1}{\mathrm e}^{\alpha_{1}G(x_0)}, \ldots , c_{M}{\mathrm e}^{\alpha_{M}G(x_0)}\right) \, \left( {\mathrm e}^{\alpha_{j}hk} \right)_{j=1,m=0}^{M,M-1}.
\end{eqnarray}
The first and the last matrix factor are invertible Vandermonde matrices with pairwise different nodes ${\mathrm e}^{\alpha_{j}h}$, and the diagonal matrix is invertible, since $c_{j} \neq 0$.

Having solved (\ref{lini}), we can reconstruct $p(z)$ and extract all its zeros $z_{j} = {\mathrm e}^{\alpha_{j}h}$. In a second step we can compute the  coefficients $c_{j}$ from the overdetermined system 
\begin{equation} \label{cj}
f(G^{-1}(h\ell + G(x_0)) = \sum_{j=1}^{M} c_{j} \, H(G^{-1}(h\ell + G(x_0))) \, {\mathrm e}^{\alpha_{j}(h\ell + G(x_0))}, 
\end{equation}
for $\ell=0, \ldots , 2M-1$. 
\hfill \qed
\end{proof}

\subsection{Application to Special Expansions} \label{subsecexamp}

The model (\ref{expGH}) covers many special expansions, and we want to illustrate some of them.

\subsubsection{Classical Exponential Sums.}\label{subsecclex}
Obviously, the model (\ref{exp}) is a special case of (\ref{expGH}) with $G(x)\coloneqq x$ and $H(x)\coloneqq 1$.
In this case, we have
$$ g(x) \equiv 1, \quad h(x) \equiv 0$$
in (\ref{gh}) such that $A$ in (\ref{diff}) reduces to $Af = f'$. The generalized shift operator in (\ref{shift}) with $G^{-1}(x) = x$ is of the form
$ S_{1,x,h} f(x)=  f(h+x)$ and is therefore just the usual shift operator $S_{h}$ in Section \ref{sec2}.
By Theorem 1, the sample values $f^{(\ell)}(x_0)$, $\ell =0, \ldots ,2M-1$  are sufficient for recovery of $f$, where in this case the interval $[a,b]$ can be chosen arbitrarily in $\mathbb{R}$ and thus also $x_0$. Theorem 2 provides the set of sample values $f(a+h\ell)$ similarly as we had seen already in Section \ref{sec2}.

\subsubsection{Expansions into Shifted Gaussians.}\label{subsecgauss}
We want to reconstruct expansions of the form
\begin{equation}\label{gauss} f(x) = \sum_{j=1}^{M} c_{j} \, {\mathrm e}^{-\beta(x-\alpha_{j})^{2}}, 
\end{equation}
where $\beta  \in \mathbb{C} \setminus \{ 0 \} $ is known beforehand, and we need to find $c_{j} \in \mathbb{C} \setminus \{ 0 \}$ and pairwise different $\alpha_{j} \in \mathbb{C}$, see also \cite{VMB02,PSK19}. 

First, we observe 
that by 
$$ {\mathrm e}^{-\beta(x-\alpha_{j})^{2}} = {\mathrm e}^{-\beta \alpha_{j}^{2}} \,  {\mathrm e}^{-\beta x^{2}} \,  {\mathrm e}^{2\beta \alpha_{j} x},
 $$
that  these functions are  of the form $H(x) \, {\mathrm e}^{\alpha_{j} G(x)}$, where here
 $$ H(x)\coloneqq {\mathrm e}^{-\beta \alpha_{j}^{2}} {\mathrm e}^{-\beta x^{2}}, \qquad  G(x)\coloneqq 2\beta x. $$
 Using the results in Section \ref{subsecdiffop} and \ref{subsecshift}, (\ref{gh}) yields
 $$ g(x) = \frac{1}{G'(x)} = \frac{1}{2\beta}, \qquad h(x) = - g(x) \frac{H'(x)}{H(x)} = -\frac{1}{2\beta} (- 2\beta x) = x. $$
We define the operator $A$ by $Af (x)\coloneqq \frac{1}{2\beta} f'(x)  +x \, f(x)$ and find
$$
A \, \left({\mathrm e}^{-\beta(\cdot -\alpha_{j})^{2}}\right) (x)  = \left( \frac{1}{2\beta} ( - 2 \beta (x -{\alpha}_{j})) + x \right) {\mathrm e}^{-\beta(x-\alpha_{j})^{2}} = \alpha_{j} \, {\mathrm e}^{-\beta(x-\alpha_{j})^{2}} .
$$
Thus,  we can reconstruct $f$ in (\ref{gauss}) according to Theorem \ref{theo1}  from the derivative samples  $f^{(\ell)}(x_0)$, $\ell =0, \ldots , 2M-1$. Here, $x_0$ can be chosen arbitrarily in $\mathbb{R}$, since $G'(x) = 2 \beta  \neq 0$ and $H(x) \neq 0$ for all $x \in \mathbb{R}$, which means that the interval $[a,b]$ can be chosen arbitrarily in Theorem \ref{theo1}. 
\smallskip

Another sampling set is obtained  by Theorem \ref{theo2}.  We find the generalized shift operator $S_{H,G,h}$ in (\ref{shift}) here of the form 
\begin{equation}\label{SG} S_{H,G,h} f (x) = \frac{{\mathrm e}^{-\beta x^{2}}}{{\mathrm e}^{-\beta((h+2\beta x)/2\beta)^{2}}} \, f\left(\frac{h+2\beta x}{2\beta} \right) 
= {\mathrm e}^{h(x+h/4\beta)} \, f\left(x+\frac{h}{2\beta}\right).
\end{equation}
Then
\begin{eqnarray*}
S_{H,G,h} ({\mathrm e}^{-\beta(\cdot - \alpha_{j})^{2}})(x) &=& 
{\mathrm e}^{h(x+h/4\beta)} \, {\mathrm e}^{-\beta(x +\frac{h}{2\beta}- \alpha_{j})^{2}} \\
&=& {\mathrm e}^{h \alpha_{j}} \, {\mathrm e}^{-\beta(x - \alpha_{j})^{2}}.
\end{eqnarray*}
Therefore, the expansion in (\ref{gauss}) is an expansion into eigenfunctions of the generalized  shift operator in (\ref{SG})  and can be reconstructed from the equidistant samples 
$$ f\left(a + \frac{h\ell}{2\beta} \right), \qquad \ell=0, \ldots , 2M-1, $$
where $a \in \mathbb{R}$ can be chosen arbitrarily and $0 < |h| < \frac{\pi}{T}$, where $T$ is the  a priori known bound satisfying $|\alpha_{j}| < T$ for all $j=1, \ldots , M$. Since the interval $[a,b]$ occurring in Theorem \ref{theo2} can be chosen arbitrarily large, we can always choose it such that 
$$ \frac{|G(b)- G(a)|}{2M} = \frac{2|\beta| (b-a) }{2 M } > \frac{\pi}{T}.$$
Thus, there is no further condition on the choice of $h$. 
We note that it is also possible to choose $G(x)=x$ and thus substituting $\widetilde{\alpha}_j=\alpha_j 2\beta$ for $j=1,\dots,M$. This is useful in the case of $\mathrm{Im}\beta \neq 0$. 

\begin{remark}
The model (\ref{gauss}) particularly also includes expansions into modulated shifted Gaussians 
$$ f(x) = \sum_{j=1}^{M} c_{j} \, {\mathrm e}^{2\pi {\mathrm i} x \kappa_{j}} \, {\mathrm e}^{-\beta(x-s_{j})}$$
with $\kappa_{j} \in [0,1)$ and $s_{j} \in \mathbb{R}$ which have been considered in \cite{PSK19}. Since
$$ {\mathrm e}^{2\pi {\mathrm i} x \kappa_{j}} \, {\mathrm e}^{-\beta(x-s_{j})} = {\mathrm e}^{-\beta s_{j}^{2}} \, {\mathrm e}^{-\beta x^{2}} \, 
{\mathrm e}^{-x(2\beta s_{j} + 2\pi {\mathrm i} \kappa_{j})}, $$
we choose $\alpha_{j} \coloneqq 2\beta s_{j} +  2\pi {\mathrm i} \kappa_{j}$.
It is sufficient to recover the parameter $\alpha_{j}$ is sufficient to find the parameters $s_{j}$ and $\kappa_{j}$ from the real and the imaginary part of $\alpha_{j}$, respectively for $j=1,\dots,M$. 
\end{remark}

\begin{example}\label{ex:gauss}
We illustrate the recovery of expansions into shifted Gaussians and consider $f(x)$ of the form (\ref{gauss}) with $M=10$.
The parameters in Table \ref{tabelle1} have been obtained by applying uniform random sampling from the intervals $(-3,3) + {\mathrm i}(-2,2)$ for $c_{j}$ and from $(-2, 2)$ for $\alpha_{j}$. We chose the starting parameter $x_0=-1$, the step size $h=1$ and $\beta=\mathrm{i}$.    
\medskip

\begin{table}[h] \label{tabelle1}
{\small
\begin{tabular}{c|r|r|r|r|r|r|r|r|r|r}
 & $j=1$ & $j=2$& $j=3$& $j=4$ & $j=5$ &$j=6$ &$j=7$&$j=8$ &$j=9$ &$j=10$ \\[1ex] \hline
 $\textup{Re}\, c_j$ & $  -1.754$ & $-1.193 $ & $0.174$ & $-1.617$ & $2.066$  & $-1.831$ & $-1.644$ & $-1.976$ &$-1.634 $& $-0.386$ \\[1ex]
  $\textup{Im}\,  c_j $ & $ - 0.756$ & $1.694$& $- 0.279$ &$  -1.261$ & $1.620$ & $1.919$ & $ -0.245$ & $ -1.556$ & $-0.968$ &$ -0.365$ \\[1ex]
 $\alpha_j$  &  $0.380$ & $-0.951$ & $0.411$ & $0.845$ & $-1.113$ & $-1.530$ & $-0.813$ & $-0.725$ & $-0.303$ & $-0.031$\\
 \end{tabular}
 \medskip
 
 \caption{Parameters $c_{j}$ and $\alpha_{j}$ for $f(x)$ in (\ref{gauss}) with $M=10$, see Figure \ref{fig:gauss1}.}
} 
\end{table}

The reconstruction algorithm uses the $20$ samples $f(k)$, $k=-1, \ldots , 18$, which are represented as black dots in Figure \ref{fig:gauss1}.
The maximal reconstruction error for the parameters $\alpha_{j}$ parameters $c_{j}$ are
$$err_{\alpha}=1.518622755454592\cdot 10^{-11}, \qquad err_{c}=5.286537816367291\cdot 10^{-10}.$$

\begin{figure}
  \hspace*{-15mm}
      \includegraphics[width=1.2\textwidth]{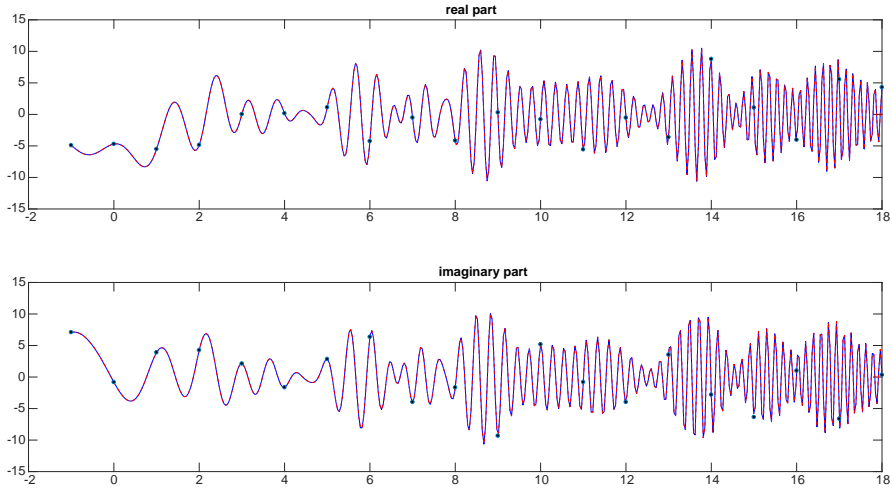} 
      \caption{Real and imaginary part of the signal $f(x)$ consisting of shifted Gaussians as given in Example \textup{\ref{ex:gauss}}. The black dots indicate the used signal values. Here the reconstructed signal is shown in red.}
  \label{fig:gauss1}
\end{figure}
\end{example}

\subsubsection{Expansions into Functions of the Form $\exp( \alpha_{j}\sin x)$.}\label{subsecexpsin}

We want to reconstruct  expansions of the form 
\begin{equation}\label{esin}
f(x) = \sum_{j=1}^{M} c_{j} \, {\mathrm e}^{\alpha_{j} \sin x},
\end{equation}
where we need to find $c_{j} \in \mathbb{C} \setminus \{ 0 \}$  and pairwise different $\alpha_{j} \in {\mathbb{C}}$. 
Here, ${\mathrm e}^{\alpha_{j} \sin x}$ is of the form $H(x) {\mathrm e}^{\alpha_{j} G(x)}$ with $H(x) \coloneqq1$ and $G(x)\coloneqq \sin(x)$. To ensure that $G(x)$ is strictly monotone, we choose the interval $(-\frac{\pi}{2} \, \frac{\pi}{2})$.
With $g(x) = (G'(x))^{-1} = (\cos (x))^{-1}$ and $h(x) = 0$. We define the differential operator 
 $Af(x) =  (\cos (x))^{-1} f'(x)$ and find
$$ A  ({\mathrm e}^{\alpha_{j} \sin(\cdot)}) (x) = \frac{1}{\cos(x)} (\alpha_{j} \cos(x) \, {\mathrm e}^{\alpha_{j} \sin(x)}) = \alpha_{j} \, {\mathrm e}^{\alpha_{j} \sin(x)}.$$
According to Theorem \ref{theo1} we can therefore reconstruct $f$ in (\ref{esin}) from the derivative samples $f^{(\ell)}(x_0) $ for some $x_0 \in (-\frac{\pi}{2}, \frac{\pi}{2})$.

Using Theorem \ref{theo2}, we define with $H(x) \coloneqq1$ and $G(x)\coloneqq \sin(x)$ the generalized shift operator 
$$ S_{H,G,h} f (x) = f(G^{-1}(h+ G(x)) = f(\arcsin(h+\sin(x))).$$
We have to choose $x_0$ and $h$ such that all samples $f(\arcsin(h\ell+\sin(x_0)))$ are well-defined, i.e., $\sin(x_0) + h\ell \in (-\frac{\pi}{2}, \frac{\pi}{2})$ for $\ell=0, \ldots, 2M-1$. This is ensured for $a=-\frac{\pi}{2}+ \frac{h}{2}$ and $0 < h \le 1/(M+1)$. 

\begin{example}
We illustrate the reconstruction of a function $f(x)$ of the form (\ref{esin}) with $M=10$ and 
with real parameters $c_{j}$ and $\alpha_{j}$ in Table 2. The have been obtained by applying a uniform random sampling from the intervals $(-3,3)$ for $c_{j}$ and from $(-\pi, \pi)$ for $\alpha_{j}$. We chose a sampling distance $h=\frac{1}{17}$ and a starting point $x_0=-\frac{\pi}{2}+\frac{h}{2}=-\frac{\pi}{2}+\frac{1}{34}$.

\begin{table}[h]
\centering
\begin{tabular}{c|r|r|r|r|r|r|r|r|r|r}
 & $j=1$ & $j=2$& $j=3$& $j=4$ & $j=5$ &$j=6$ &$j=7$&$j=8$ &$j=9$ &$j=10$ \\[1ex] \hline
 $ c_j$ & $ ~2.104$ & $~0.363 $ & $~2.578$ & $~1.180$ & $~0.497$  & $~1.892$ & $~2.274$ & $~2.933$ &$-2.997 $& $~2.192$ \\[1ex]
 $\alpha_j$  &  $1.499$ & $0.540$ & $-1.591$ & $1.046$ & $-2.619$ & $0.791$ & $1.011$ & $1.444$ & $2.455$ & $3.030$\\[1ex]
\end{tabular}
 \medskip
 \caption{Parameters $c_{j}$ and $\alpha_{j}$ for $f(x)$ in (\ref{esin}) with $M=10$, see Figure \ref{fig:esin}.}
\end{table}

The reconstruction problem is ill-posed, and we cannot reconstruct the exact parameters with high precision, however, the reconstructed function is  a very good approximation of $f$.

\begin{figure}
  \centering
      \includegraphics[width=0.8\textwidth, height=60mm]{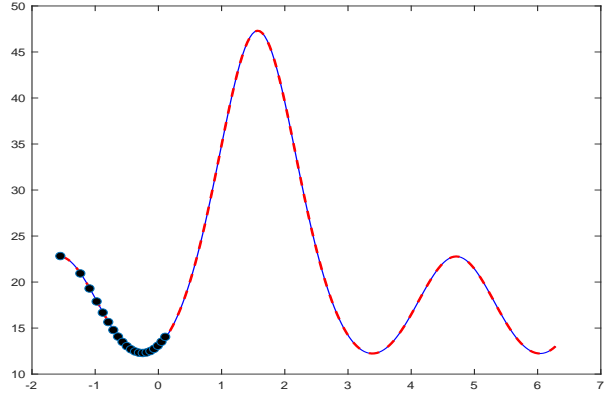} 
      \caption{Signal $f(x)$ in (\ref{esin}) consisting of $M=10$ terms according to Table 2.  The black dots indicate the used signal values and the reconstructed signal is shown in red.}
  \label{fig:esin}
\end{figure}
\end{example}

\section{Numerical Treatment of the Generalized Prony Method}\label{sec4}

In this section, we consider some numerical procedures to recover the parameters $\alpha_{j}, \, c_{j}$, $j=1, \ldots , M$, in (\ref{expGH}) resp. (\ref{f2}). 

\subsection{The simple Prony Algorithm}
First we summarize the direct algorithm for the recovery of $f$ in (\ref{f2}) from the function values $f(G^{-1}(h\ell + G(x_0)))$, $\ell=0, \ldots , 2M-1$,  according to the proof of Theorem \ref{theo2}.
\medskip

\noindent
\textbf{Algorithm 1.} \\[1ex]
\textbf{Input:} $M \in \mathbb{N}$, $h > 0$, sampled values $f(G^{-1}(h \ell + G(x_0)))$, $\ell=0, \ldots , 2M-1$.
\begin{enumerate}
\item Solve the linear system (\ref{lini}) to find the vector ${\mathbf p} = (p_{0}, \ldots , p_{M-1})^{T}$.
\item Compute all zeros  $z_{j} \in {\mathbb{C}}$, $j=1, \ldots , M$, of $p(z) = \sum\limits_{k=0}^{M-1} p_{k} \, z^{k} + z^{M}$.
\item Extract the coefficients $\alpha_{j}\coloneqq \frac{1}{h} \log z_{j}$ from $z_{j}= {\mathrm e}^{\alpha_{j}h}$, $j=1, \ldots. M$.
\item Solve the system (\ref{cj}) to compute $c_{1}, \ldots , c_{M} \in {\mathbb{C}}$.
\end{enumerate}
\textbf{Output:} $\alpha_{j} \in {\mathbb{R}} + {\mathrm i} [-\frac{\pi}{h}, \,\frac{\pi}{h} )$, $c_{j} \in {\mathbb{C}}$, $j=1, \ldots , M$.

\medskip

The assumptions of Theorem \ref{theo2} imply that the coefficient matrix of the linear system (\ref{lini}) is the invertible Hankel matrix,
$$ {\mathbf H}_{M} \coloneqq \left( \frac{f(G^{-1}(h(k+m) + G(a)))}{H(G^{-1}(h(k+m) + G(a)))} \right)_{k,m=0}^{M-1}. $$
The factorization (\ref{fak}) indicates that ${\mathbf H}_{M}$ may have very high condition that particularly depends on the condition of the Vandermonde matrix $\left( \mathrm{e}^{\alpha_{j}hm} \right)_{m=0,j=1}^{M-1,M}$. 

\subsection{ESPRIT for the Generalized Prony Method}\label{subsecesp}

We are interested in a more stable implementation of the recovery method  and present a modification of the ESPRIT method, see \cite{RK89,PT13,PT14} for the classical exponential sum.
We assume that  the number of terms $M$ in (\ref{expGH}) is not given beforehand, but $L$ is a known upper bound of $M$. 
In the following, we use the  notation ${\mathbf A}_{K,N}$ for  a rectangular matrix in ${\mathbb{C}}^{K\times N}$ and ${\mathbf A}_{K}$ for a square matrix in ${\mathbb{C}}^{K \times K}$, i.e., the subscripts indicate the matrix dimension.

Let 
\begin{equation}\label{hl} f_{\ell} \coloneqq \frac{f(G^{-1}(h \ell + G(x_0)))}{H(G^{-1}(h \ell+ G(x_0)))} , \qquad \ell=0, \ldots , 2N-1, 
\end{equation}
 be given and well defined, where $N \ge L \ge M$.
 
We consider first the rectangular Hankel matrix 
 $$ {\mathbf H}_{2N-L, L+1} \coloneqq \left( f_{\ell+m} \right)_{\ell,m=0}^{2N-L-1,L} \in {\mathbb{C}}^{(2N-L) \times(L+1)}.
$$ For exact data, (\ref{p0}) implies that rank ${\mathbf H}_{2N-L,L+1} = M$.
We therefore compute the singular value decomposition  of ${\mathbf H}_{2N-L,L+1}$,
\begin{equation}\label{sing}
{\mathbf H}_{2N-L,L+1} = {\mathbf U}_{2N-L} \, {\mathbf D}_{2N-L,L+1} \, {\mathbf W}_{L+1},
\end{equation}
with unitary square matrices ${\mathbf U}_{2N-L}$, ${\mathbf W}_{L+1}$ and a rectangular diagonal matrix ${\mathbf D}_{2N-L,L+1}$ containing the singular values of ${\mathbf H}_{2N-L,L+1}$.
We now determine the numerical rank $M$ of ${\mathbf H}_{2N-L,L+1}$ by inspecting  its singular values $\widetilde\sigma_{1} \ge \widetilde\sigma_{2} \ge  \ldots \ge \widetilde\sigma_{L+1} \ge 0$. 
We find $M$ as the number of singular values being larger than a predefined bound $\epsilon$. Usually, we can find a gap between $\widetilde{\sigma}_{M}$ and the further singular values $\widetilde\sigma_{M+1}, \ldots , \widetilde\sigma_{L+1}$, which are close to zero.
We now redefine the Hankel matrix and consider 
$ {\mathbf H}_{2N-M, M+1} \coloneqq \left( f_{\ell+m} \right)_{\ell,m=0}^{2N-M-1,M} \in {\mathbb{C}}^{(2N-M) \times(M+1)} $ with the corresponding SVD
\begin{equation} \label{S}{\mathbf H}_{2N-M,M+1} = {\mathbf U}_{2N-M} \, {\mathbf D}_{2N-M,M+1} \, {\mathbf W}_{M+1}, 
\end{equation}
with unitary matrices ${\mathbf U}_{2N-M}$ and ${\mathbf W}_{M+1}$. For exact data, ${\mathbf H}_{2N-M, M+1}$ has rank $M$, and
${\mathbf D}_{2N-M,M+1}^{T}= \left( \textrm{diag} (\sigma_{1}, \ldots , \sigma_{M}, \, 0), {\mathbf 0} \right) \in {\mathbb{R}}^{(M+1) \times (2N-M)}$ with $\sigma_{1}\ge \sigma_{2} \ge \ldots \ge \sigma_{M} > 0$.

We introduce  the sub-matrices ${\mathbf H}_{2N-M,M}(0)$ and ${\mathbf H}_{2N-M,M}(1)$ given by 
$$ \widetilde{\mathbf H}_{2N-M,M+1} \!= \! \left( {\mathbf H}_{2N-M,M}(0) ,  (f_{\ell+M})_{\ell=0}^{2N-M-1} \right) \!=\! \left( (f_{\ell})_{\ell=0}^{2N-M-1}, {\mathbf H}_{2N-M,M}(1) \right), $$
i.e., we obtain ${\mathbf H}_{2N-M,M}(0)$ be removing the last column of ${\mathbf H}_{2N-M,M+1}$ and ${\mathbf H}_{2N-M,M}(1)$ by removing the first column of ${\mathbf H}_{2N-M,M+1}$.
For exact data (\ref{lini}) yields   
\begin{equation}\label{H}
{\mathbf H}_{2N-M,M}(0) \, {\mathbf p}  = - \left( f_{\ell+M} \right)_{\ell=0}^{2N-M-1}, 
\end{equation}
where ${\mathbf p} = (p_{0}, \ldots , p_{M-1})^{T}$  contains the coefficients of the Prony polynomial in (\ref{pro}). Let 
$$ {\mathbf C}_{M}({\mathbf p}) \coloneqq \left( \begin{array}{ccccr}
0 \, & 0 \, & \ldots & \, 0  & -p_{0} \\
1 \, & 0 \,  & \ldots & \, 0 & -p_{1} \\
0 \, & 1 \, & \ldots & \, 0 & -p_{2} \\
\vdots & \, \vdots &  & \vdots & \vdots \\
0 \, & 0 \, & \ldots & \, 1 & - p_{M-1} \end{array} \right) \in {\mathbb{C}}^{M \times M}$$
be the (unknown) companion matrix of ${\mathbf p}$ possessing the $M$ zeros of $p(z) $ in (\ref{pro}) as eigenvalues. By (\ref{H}) it follows that 
\begin{equation}\label{C}
{\mathbf H}_{2N-M,M}(0) \, {\mathbf C}_{M}({\mathbf p}) = {\mathbf H}_{2N-M,M}(1). 
\end{equation} 
This observation leads to the following algorithm. 
According to (\ref{S})  we find the factorizations
\begin{eqnarray*}
{\mathbf H}_{2N-M,M}(0) &=&  {\mathbf U}_{2N-M} \, {\mathbf D}_{2N-M,M+1} \, {\mathbf W}_{M+1,M}(0), \\ 
{\mathbf H}_{2N-M,M}(1) &=&  {\mathbf U}_{2N-M} \, {\mathbf D}_{2N-M,M+1} \, {\mathbf W}_{M+1,M}(1) ,
\end{eqnarray*}
where ${\mathbf W}_{M+1,M}(0)$ is obtained by removing the last column of ${\mathbf W}_{M+1}$ and $ {\mathbf W}_{M+1,M}(1)$ by removing its first column. Now, (\ref{C}) implies 
$$ {\mathbf D}_{2N-M,M+1} {\mathbf W}_{M+1,M}(0) \, {\mathbf C}_{M} ({\mathbf p}) = {\mathbf D}_{2N-M,M+1} {\mathbf W}_{M+1,M}(1),$$
and by multiplication with the generalized inverse 
$${\mathbf D}_{2N-M,M+1}^{\dagger} = \left( \textrm{diag}\, (\frac{1}{\sigma_{1}}, \ldots \frac{1}{\sigma_{M}}, 0), {\mathbf 0} \right) \in {\mathbb{R}}^{(M+1) \times (2N-M)}.$$
Finally,
$$ {\mathbf W}_{M}(0) \, {\mathbf C}_{M} ({\mathbf p})  =  {\mathbf W}_{M}(1), $$
where the square matrices ${\mathbf W}_{M}(0)$ and ${\mathbf W}_{M}(1)$ are obtained from  ${\mathbf W}_{M+1,M}(0)$ and ${\mathbf W}_{M+1,M}(1)$, respectively, by removing the last row. Thus, the eigenvalues of ${\mathbf C}_{M} ({\mathbf p})$ are equal to the eigenvalues of 
$$ {\mathbf W}_{M}(0)^{-1} \, {\mathbf W}_{M}(1), $$
where ${\mathbf W}_{M}(0)$ is invertible since ${\mathbf C}_{M} ({\mathbf p})$ is invertible. (We can assume here the $z_{j} \neq 0$ since $z_{j} = {\mathrm e}^{\alpha_{j}}$.)
We therefore obtain the following new algorithm.
\medskip

\noindent
\textbf{Algorithm 2} (ESPRIT for the generalized Prony method)\\
\textbf{Input:} $L,N \in \mathbb{N}$, $L\le N$, $L$ upper bound for the number  $M$ of  terms in (\ref{f2}), sample values $f_{\ell}$, $\ell=0, \ldots , 2N-1$ as given in (\ref{hl}), $G(x_0)$.
\begin{enumerate}
\item Compute the SVD of the rectangular Hankel matrix ${\mathbf H}_{2N-L,L}$ as in (\ref{sing}). Determine the numerical rank $M$ of ${\mathbf H}_{2N-L,L}$, and compute the SVD of ${\mathbf H}_{2N-M,M+1}= {\mathbf U}_{2N-M} \, {\mathbf D}_{2N-M,M+1} \, {\mathbf W}_{M+1}$.
\item Build the restricted matrix  ${\mathbf W}_{M}(0)$  by removing the last column  and the last row  of ${\mathbf W}_{M+1}$ and ${\mathbf W}_{M}(1)$ by removing the  first column  and the last row of ${\mathbf W}_{M+1}$.
Compute the eigenvalues $z_{j}$ $j=0, \ldots , M$ of ${\mathbf W}_{M}(0)^{-1} {\mathbf W}_{M}(1)$.
\item Extract the coefficients $\alpha_{j}\coloneqq \frac{1}{h} \log z_{j}$ from $z_{j}= {\mathrm e}^{\alpha_{j}h}$, $j=1, \ldots. M$.
\item Solve the overdetermined system 
$$  f(\ell ) = \sum_{j=1}^{M} c_{j} \, z_{j}^{G(x_0)/h} \, z_{j}^{\ell}, \qquad \ell=0, \ldots , 2N-1,$$
to compute $c_{1}, \ldots , c_{M} \in {\mathbb{C}}$.
\end{enumerate}
\textbf{Output:} $M$, $\alpha_{j} \in {\mathbb{R}} + {\mathrm i} [-\frac{\pi}{h}, \,\frac{\pi}{h} )$, $c_{j} \in {\mathbb{C}}$, $j=1, \ldots , M$.
\medskip

\begin{example}
We compare the performance of the classical Prony method in Algorithm 1 with the ESPRIT method in Algorithm 2 and focus on the reconstruction of the frequency parameters. In our numerical example we choose $M=5$ and the parameter vectors $\alphar= (\alpha_{j})_{j=1}^{M}$, ${\mathbf c} = (c_{j})_{j=1}^{M} $ as 
$$
\alphar=(\frac{\pi}{2}, \frac{\mathrm{i}\pi}{4}, 0.4+\mathrm{i},-0.5,-1)^T \textrm{ and } {\mathbf c}=(0.5,2,-3,0.4i,-0.2)^T. 
$$
For the ESPRIT Algorithm 2 we have used $N=15$, i.e. $30$ sample values, and have fixed an upper bound $L=10$. For the rank approximation we have applied a bound $\epsilon=10^{-8}$. In Table \ref{tab3}, we present the results of parameter reconstruction using Algorithms 1 and 2. 
\begin{table}\label{tab3}
\centering
\begin{tabular}{c|r|r|r}
 $j$ & exact $\alpha_j$ & $\alpha_{j} \quad$ (Algorithm 1)&$\alpha_{j} \quad $(Algorithm 2)\\[0.5ex] \hline
 $j=1$ & $\frac{\pi}{2}$& $ 1.57121 + 6.0886\cdot 10^{-5}\mathrm{i}$ &$1.57079 - 2.3198\cdot10^{-8}\mathrm{i}$ \\[0.5ex]
 $j=2$ & $ \frac{\mathrm{i} \pi}{4}$ &$0.00231 + 0.7928\mathrm{i} $ & $2.00492\cdot 10^{-6}+ 0.7854\mathrm{i}$ \\[0.5ex]
$j=3$ & $0.4+\mathrm{i}$ & $0.40168 + 0.9982\mathrm{i}$ & $0.4000 + 1\mathrm{i}$\\[0.5ex]
$j=4$ & $-0.5$&$-0.49944 - 0.0013\mathrm{i}$ & $-0.5 - 4.3008\cdot 10^{-07}\mathrm{i}
$\\[0.5ex]
 $j=5$ & $-1$& $-1.00019 - 0.0042\mathrm{i}$ & $-1.0 - 1.1763^{-06}\mathrm{i}$
\end{tabular}
\end{table}
\end{example}

\begin{remark} The Hankel matrices occurring in the considered reconstruction problems can have a very high  condition. However, there are stable algorithms available to compute the SVD for Hankel matrices, particularly for the square case, see e.g. \cite{Drm15}.
\end{remark}

\subsection{Simplification in the Case of partially Known Frequency Parameters}\label{subsecpart}

In some applications it may occur that one or more of the parameters $\alpha_{j}$, or equivalently $z_{j}= {\mathrm e}^{\alpha_{j}h}$, are already known beforehand. However, if the corresponding coefficients $c_{j}$ are unknown, we cannot just eliminate the term $c_{j} \, H(x) \, {\mathrm e}^{\alpha_{j} G(x)}$ from the sum in (\ref{f2}) to get new measurements of the simplified sum from the original measurements. However, we can use the following approach.
Recall that the vector $\widetilde{\mathbf p}=(p_{0}, \ldots , p_{M})^{T}$ of coefficients of the Prony polynomial 
$$ p(z) =  \sum_{k=0}^{M} p_{k} z^{k}  = \prod_{j=1}^{M} (z- z_{j}) $$
satisfies 
$$ {\mathbf H}_{2N-M,M+1} \, \widetilde{\mathbf p} = {\mathbf 0}, $$
with ${\mathbf H}_{2N-M,M+1}$as in (\ref{hl}). 
Assume that $z_{1}$ is already known beforehand, and let 
$$q(z) = \sum_{k=0}^{M-1} q_{k} z^{k} =   \prod_{j=2}^{M} (z- z_{j}),$$ with the coefficient vector  $(q_{0}, \ldots , q_{M-1})^{T}$. Then  
$p(z) = (z-z_{1}) q(z)$ implies for the coefficient vectors
$$ \widetilde{\mathbf p} = \left( \begin{array}{c} 0 \\ q_{0} \\ \vdots \\ q_{M-1} \end{array} \right) - z_{1} \left( \begin{array}{c} q_{0} \\ \vdots \\ q_{M-1} \\ 0 \end{array} \right). $$
Thus
$$ {\mathbf H}_{2N-M,M+1} \widetilde{\mathbf p} = \left( {\mathbf H}_{2N-M,M}(1)- z_{1} {\mathbf H}_{2N-M,M}(0)\right) \, {\mathbf q} = {\mathbf 0},
$$
with ${\mathbf H}_{2N-M,M}(0)$  and ${\mathbf H}_{2N-M,M}(1)$ denoting the submatrices of ${\mathbf H}_{2N-M,M+1}$ where either the last column or the first column is removed.
Therefore, we easily find the new Hankel matrix 
$$ \widetilde{\mathbf H}_{2N-M,M} =  {\mathbf H}_{2N-M,M}(1)- z_{1} {\mathbf H}_{2N-M,M}(0) $$
 for the reduced problem. 
Observe from (\ref{hl}), that the new values in ${\mathbf H}_{2N-M,M}(1) - z_{1} {\mathbf H}_{2N-M,M}(0)$ are of the form 
\begin{eqnarray*}
\widetilde{f}_{\ell} =   f_{\ell+1} - z_{1} f_{\ell} &=&  
\sum_{j=1}^{M} c_{j} {\mathrm e}^{\alpha_{j}(f(\ell+1) + G(x_0))} - {\mathrm e}^{\alpha_{1}h} \sum_{j=1}^{M} c_{j} {\mathrm e}^{\alpha_{j}(f\ell + G(x_0))}  \\
&=& \sum_{j=2}^{M} c_{j} ({\mathrm e}^{\alpha_{j}h}-{\mathrm e}^{\alpha_{1}h}) {\mathrm e}^{\alpha_{j}(h\ell + G(x_0))}, 
\end{eqnarray*}
i.e., the coefficients $c_{j}$,$j=2,\dots,M$, are changed to $\widetilde{c}_{j}=c_{j} ({\mathrm e}^{\alpha_{j}h}-{\mathrm e}^{\alpha_{1}h})$. 
Thus, we can use the samples $\widetilde{f}_{\ell}$ to recover the shorter sum $\sum\limits_{j=2}^{M} \widetilde{c}_{j}  H(x) {\mathrm e}^{\alpha_{j}G(x)}$. Once we have computed the remaining $\alpha_j$,$j=2,\dots,M$ we compute the coefficients $c_j$ by solving system (\ref{cj}) for $j=1,\dots,M$.  

\subsection{Recovery of Exponential Sums from Non-equispaced Data}\label{subsecnoneq}
This section is devoted to the recovery of a signal $f$ of the form
\begin{align*}
f(x)=\sum_{j=1}^M c_j e^{\alpha_j x} 
\end{align*}
with $c_j \in \mathbb{C} \setminus \{0\}$, $\alpha_j \in \mathbb{C}$ for $j=1,\dots,M$ using non-equidistant data 
$$ a = y_{0} < y_{1} < \ldots  < y_{2N-1} = b. 
$$
Similarly to the ESPRIT-like algorithm in Section \ref{subsecesp} $M\le L \le N$ holds. In order to recover $f$ from the non-equispaced samples we try to find a continuous strictly monotone function $G^{-1}$ on $[a,b]$ such that 
\begin{align*}
G^{-1}(y_{\ell})=\ell h ~~~~\ell=0,\dots,2N-1. 
\end{align*}
 We employ the substitution
$y\coloneqq G^{-1}(x)$ with $ x\in [a,b]$.
\begin{align*}
f(x)=\sum_{j=1}^M c_j e^{\alpha_j G\left(G^{-1}(x) \right) }= \sum_{j=1}^M c_j e^{\alpha_j G(y)}.
\end{align*} 
This is a special form of the model (\ref{expGa}) with $H(x)\equiv 1$ and can be recovered using the ESPRIT-like Algorithm. 
%
In some applications, the nodes $y_{\ell}$  may already satisfy a predefined known structure that provides us with the function $G^{-1}$. Otherwise, we can find some $G^{-1}$ by solving the interpolation problem above.
Theorem 2 already holds for continuous and strictly monotone functions $G$. Therefore, one simple choice for $G^{-1}$ would be  a piecewise linear spline  function with 
\begin{equation}\label{G}
G^{-1}(y) = \ell \, h + \left( \frac{y-y_{\ell}}{y_{\ell+1}- y_{\ell}} \right) \, h \quad \textrm{for} \; y \in [y_{\ell}, \, y_{\ell+1}], \; \ell=0, \ldots , 2N-2, 
\end{equation}
which is strictly increasing. We summarize the algorithm  for the recovery of the exponential sum with Im $\alpha_{j}  \in (-T, T]$.

\medskip

\noindent
\textbf{Algorithm 3} (ESPRIT for non-equispaced sampled data)\\
\textbf{Input:} $L,N \in \mathbb{N}$, $L\le N$, $L$ upper bound for the number  $M$ of  terms in (\ref{exp}), 
$h$ with $0 < h \le \frac{\pi}{T}$, sample values $f(y_{\ell})$, $\ell=0, \ldots , 2N-1$, where $y_{0} < y_{1} < \ldots < y_{2N-1}$.
\begin{enumerate}
\item Compute  a continuous strictly monotone  function $G^{-1}$ satisfying the interpolation conditions 
$$ G^{-1}(y_{\ell}) = h \ell, \qquad \ell=0, \ldots , 2N-1, $$
as e.g. in (\ref{G}). 
\item Apply Algorithm 2  with $G^{-1}(y)$ as determined in step 1 and with $H(y) \equiv 1$ using the samples $f(y_{\ell}) = f(G(h\ell))$.
\end{enumerate}
\textbf{Output:} $M$, $\alpha_{j} \in {\mathbb{R}} + {\mathrm i} [\frac{-\pi}{h}, \,\frac{\pi}{h} )$, $c_{j} \in {\mathbb{C}}$, $j=1, \ldots , M$.
\medskip

\begin{remark}
In \cite{PT14}, another procedure for recovering the exponential sum from non-equispaced samples has been proposed, which is essentially based on the idea that the given discrete samples are first interpolated by a spline function, and then the equidistant samples of the obtained spline are applied in the usual ESPRIT algorithm. 
\end{remark}

\section{Modified Prony Method for Sparse Approximation}\label{sec5}

In this section, we want to consider the question, how to approximate a given data vector ${\mathbf y} =(y_{k})_{k=0}^{L}$ with $L \ge 2M-1$ by a new  vector $\mathbf{f}=(f_{k})_{k=0}^{L}$ whose elements are structured as 
$$ f_{k} = \sum_{j=1}^{M} c_{j} \,  z_{j}^{k},$$
i.e., $\mathbf{f}$ only depends on the parameter vectors $\mathbf{c} = (c_{j})_{j=1}^{M}$ and ${\mathbf z} = (z_{j})_{j=1}^{M}$. We assume that 
for the given data ${\mathbf y}$ the corresponding Hankel matrix ${\mathbf H} \coloneqq (y_{k+m})_{k=0,m=0}^{L-M-1,M-1}$ has full rank, i.e., that the given data  cannot be exactly represented  by an exponential sum  with less  than $M$ terms, as it can be also seen from the factorization (\ref{fak}).
Therefore, we can  suppose that $z_{j} \in {\mathbb{C}}$ are pairwise distinct and 
$c_j \in {\mathbb{C}} \setminus \{ 0 \}$. 

\subsection{The nonlinear least-squares problem}\label{subsecleast}
We want to solve the minimization problem 
\begin{equation}\label{min}
\mathop{\textrm{argmin}}\limits_{\mathbf{c},\mathbf{z} \in \mathbb{C}^{M}} \left\| {\mathbf y} - \left( \sum_{j=1}^{M} c_{j} \, z_{j}^{k} \right)_{k=0}^{L} \right\|_{2}.
\end{equation}
This problem occurs in two different scenarios. The first one is the problem of parameter estimation in case of noisy data. Assume that we have noisy samples $y_{k}=f(k) + \epsilon_{k}$, $k=0, \ldots , L$, where 
$\epsilon_{k}$ are i.i.d.\ random variables with $\epsilon_{k} \in N(0, \sigma^{2})$. 
In the second scenario we consider the sparse nonlinear approximation problem to find a function $f(x) = \sum_{j=1}^{M} c_{j} z_{j}^{x}$, which minimizes $\sum_{\ell=0}^{L} |y_{\ell} - f(\ell) |^{2}$. 
With the Vandermonde matrix 
$$ {\mathbf V}_{\mathbf z} \coloneqq \left( \begin{array}{cccc}
1 & 1 & \ldots & 1 \\
z_{1} & z_{2} & \ldots & z_{M} \\
z_{1}^{2} & z_{2}^{2} &\ldots & z_{M}^{2} \\
\vdots & \vdots & & \vdots \\
z_{1}^{L} & z_{2}^{L} &\ldots & z_{M}^{L} \end{array} \right) \in \mathbb{C}^{(L+1) \times M} $$
we have ${\mathbf f}= {\mathbf V}_{\mathbf z}\, {\mathbf c}$, and  the problem (\ref{min}) can be reformulated as
$$ \mathop{\textrm{argmin}}\limits_{\mathbf{c},\mathbf{z} \in \mathbb{C}^{M}} \| {\mathbf y} - {\mathbf V}_{\mathbf z} {\mathbf c}\|_{2}. $$
For given ${\mathbf z}$, the linear least squares problem 
$ \mathop{\textrm{argmin}}\limits_{\mathbf{c} \in \mathbb{C}^{M}}  \| {\mathbf y} - {\mathbf V}_{\mathbf z} {\mathbf c} \|_{2}$ 
can be directly solved, and we obtain ${\mathbf c} = {\mathbf V}_{\mathbf z}^{+} {\mathbf y} = [{\mathbf V}_{\mathbf z}^{*} {\mathbf V}_{\mathbf z}]^{-1} {\mathbf V}_{\mathbf z}^{*} {\mathbf y}$, since ${\mathbf V}_{\mathbf z}$ has  full rank $M$.
Thus (\ref{min}) can be simplified to
\begin{eqnarray*}
 \mathop{\textrm{argmin}}\limits_{\mathbf{z} \in \mathbb{C}^{M}}  \| {\mathbf y} - {\mathbf V}_{\mathbf z} {\mathbf V}_{\mathbf z}^{+} {\mathbf y} \|_{2}^{2}
&=& \mathop{\textrm{argmin}}\limits_{\mathbf{z} \in \mathbb{C}^{M}} \| ({\mathbf I}-{\mathbf P}_{\mathbf z} ) {\mathbf y} \|_{2}^{2} \\
&=& \mathop{\textrm{argmin}}\limits_{\mathbf{z} \in \mathbb{C}^{M}} ({\mathbf y}^{*}{\mathbf y} - {\mathbf y}^{*} {\mathbf P}_{\mathbf z} {\mathbf y})
= \mathop{\textrm{argmax}}\limits_{\mathbf{z} \in \mathbb{C}^{M}} {\mathbf y}^{*} {\mathbf P}_{\mathbf z} {\mathbf y}, 
\end{eqnarray*}
where ${\mathbf P}_{\mathbf z} = {\mathbf V}_{\mathbf z} {\mathbf V}_{\mathbf z}^{+}$ denotes the projection matrix satisfying ${\mathbf P}_{\mathbf z} = {\mathbf P}_{\mathbf z}^{*} = {\mathbf P}_{\mathbf z}^{2}$, ${\mathbf P}_{\mathbf z} {\mathbf V}_{\mathbf z} = {\mathbf V}_{\mathbf z}$ as well as ${\mathbf V}_{\mathbf z}^{+} {\mathbf P}_{\mathbf z} = {\mathbf V}_{\mathbf z}^{+}$.
Hence, similarly as for Prony's method, we can concentrate on finding the parameters  $z_{j}$ in ${\mathbf z}$ first.

Consider ${\mathbf r}({\mathbf z}) \coloneqq {\mathbf P}_{\mathbf z} {\mathbf y} \in {\mathbb{C}}^{L+1}$. Then the optimization problem is equivalent to 
\begin{equation}\label{opt}
\mathop{\textrm{argmax}}\limits_{\mathbf{z} \in \mathbb{C}^{M}} \| {\mathbf r}({\mathbf z}) \|_{2}^{2} = \mathop{\textrm{argmax}}\limits_{\mathbf{z} \in \mathbb{C}^{M}} \| {\mathbf P}_{\mathbf z} {\mathbf y} \|_{2}^{2}.
\end{equation}
To derive an iterative algorithm  for solving (\ref{opt}), we first determine the Jacobian ${\mathbf J}_{\mathbf z}$ of ${\mathbf r}({\mathbf z}) = \left( r_{\ell}({\mathbf z}) \right)_{\ell=0}^{L}$.

\begin{theorem}
The Jacobian matrix ${\mathbf J}_{\mathbf z} \in {\mathbb{C}}^{(L+1) \times M}$ of  ${\mathbf r}({\mathbf z}) $ in $(\ref{opt})$ is given by 
\begin{eqnarray} \nonumber
{\mathbf J}_{\mathbf z} &\coloneqq& \left( \frac{\partial  r_{\ell}({\mathbf z})}{\partial z_{j}} \right)_{\ell=0,j=1}^{L,M} \\
\label{J}
&=& ({\mathbf I}_{L+1} - {\mathbf P}_{\mathbf z}) {\mathbf V}_{\mathbf z}' \, \rm{diag} ({\mathbf V}_{\mathbf z}^{+} {\mathbf y}) +
({\mathbf V}_{\mathbf z}^{+})^{*} \, \rm{diag} (({\mathbf V}_{\mathbf z}')^{*} ({\mathbf I}_{L+1} - {\mathbf P}_{z}) {\mathbf y}),
\end{eqnarray}
where ${\mathbf I}_{L+1}$ denotes the identity matrix of size $L+1$, 
$$ {\mathbf V}_{\mathbf z}' \coloneqq \left( \begin{array}{cccc} 
0 & 0 & \ldots & 0 \\
1 & 1 & \ldots & 1 \\
2z_{1} & 2 z_{2} & \ldots &2 z_{M} \\
\vdots & \vdots & & \vdots \\
L z_{1}^{L-1} & L z_{2}^{L-1} & \ldots & L z_{M}^{L-1} \end{array} \right) \in \mathbb{C}^{(L+1) \times M}\quad \rm{and} \quad   \rm{diag}  ({\mathbf q}) \coloneqq \left( \begin{array}{cccc} 
q_{1} & 0  & \ldots & 0 \\
0 & q_{2} &  & 0 \\
\vdots &  & \ddots & \vdots \\
0 & 0 & \ldots & q_{M} \end{array} \right)
$$
for $ {\mathbf q}  \in {\mathbb{C}}^{M}$.  In particular, the gradient of $ \| {\mathbf r}({\mathbf z}) \|_{2}^{2}$  reads
\begin{equation}\label{grad} \nabla \| {\mathbf r}({\mathbf z}) \|_{2}^{2} = 2 {\mathbf J}_{\mathbf z}^{*} \,  {\mathbf r}({\mathbf z}) = 
\rm{diag} (({\mathbf V}_{\mathbf z}')^{T} ({\mathbf I}_{L+1} - {\mathbf P}_{z}) \overline{\mathbf y}) \, {\mathbf V}_{\mathbf z}^{+} {\mathbf y}.
\end{equation}
\end{theorem}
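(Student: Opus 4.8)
The plan is to regard $\mathbf{r}(\mathbf{z}) = \mathbf{P}_{\mathbf{z}}\mathbf{y}$ as the image of the fixed data vector $\mathbf{y}$ under the orthogonal projector
$\mathbf{P}_{\mathbf{z}} = \mathbf{V}_{\mathbf{z}}(\mathbf{V}_{\mathbf{z}}^{*}\mathbf{V}_{\mathbf{z}})^{-1}\mathbf{V}_{\mathbf{z}}^{*}$
onto the column space of $\mathbf{V}_{\mathbf{z}}$, and to differentiate this projector with respect to each node $z_j$ by a Golub--Pereyra type argument. The crucial structural observation is that $z_j$ enters $\mathbf{V}_{\mathbf{z}}$ only through its $j$-th column, so that $\partial \mathbf{V}_{\mathbf{z}}/\partial z_j = \mathbf{v}_j'\,\mathbf{e}_j^{\top}$, where $\mathbf{v}_j' = (0,1,2z_j,\dots,Lz_j^{L-1})^{\top}$ is the $j$-th column of $\mathbf{V}_{\mathbf{z}}'$ and $\mathbf{e}_j$ is the $j$-th unit vector in $\mathbb{C}^{M}$.

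First I would differentiate $\mathbf{P}_{\mathbf{z}}$ itself. Applying the product rule to the three factors together with the identity $\partial(\mathbf{V}_{\mathbf{z}}^{*}\mathbf{V}_{\mathbf{z}})^{-1}/\partial z_j = -(\mathbf{V}_{\mathbf{z}}^{*}\mathbf{V}_{\mathbf{z}})^{-1}\big(\partial(\mathbf{V}_{\mathbf{z}}^{*}\mathbf{V}_{\mathbf{z}})/\partial z_j\big)(\mathbf{V}_{\mathbf{z}}^{*}\mathbf{V}_{\mathbf{z}})^{-1}$, and using $\mathbf{V}_{\mathbf{z}}^{+} = (\mathbf{V}_{\mathbf{z}}^{*}\mathbf{V}_{\mathbf{z}})^{-1}\mathbf{V}_{\mathbf{z}}^{*}$ and $(\mathbf{V}_{\mathbf{z}}^{+})^{*} = \mathbf{V}_{\mathbf{z}}(\mathbf{V}_{\mathbf{z}}^{*}\mathbf{V}_{\mathbf{z}})^{-1}$, the expression should collapse into the symmetric two-term form
$$\frac{\partial \mathbf{P}_{\mathbf{z}}}{\partial z_j} = (\mathbf{I}_{L+1}-\mathbf{P}_{\mathbf{z}})\,\frac{\partial \mathbf{V}_{\mathbf{z}}}{\partial z_j}\,\mathbf{V}_{\mathbf{z}}^{+} + (\mathbf{V}_{\mathbf{z}}^{+})^{*}\,\Big(\frac{\partial \mathbf{V}_{\mathbf{z}}}{\partial z_j}\Big)^{*}(\mathbf{I}_{L+1}-\mathbf{P}_{\mathbf{z}}).$$
Applying this to $\mathbf{y}$ and substituting $\partial \mathbf{V}_{\mathbf{z}}/\partial z_j = \mathbf{v}_j'\mathbf{e}_j^{\top}$, the $j$-th column of the Jacobian becomes $(\mathbf{I}-\mathbf{P}_{\mathbf{z}})\mathbf{v}_j'\,(\mathbf{V}_{\mathbf{z}}^{+}\mathbf{y})_j + (\mathbf{V}_{\mathbf{z}}^{+})^{*}\mathbf{e}_j\,\big((\mathbf{v}_j')^{*}(\mathbf{I}-\mathbf{P}_{\mathbf{z}})\mathbf{y}\big)$. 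Since the scalars $(\mathbf{V}_{\mathbf{z}}^{+}\mathbf{y})_j$ and $(\mathbf{v}_j')^{*}(\mathbf{I}-\mathbf{P}_{\mathbf{z}})\mathbf{y}$ are exactly the $j$-th diagonal entries of $\mathrm{diag}(\mathbf{V}_{\mathbf{z}}^{+}\mathbf{y})$ and $\mathrm{diag}((\mathbf{V}_{\mathbf{z}}')^{*}(\mathbf{I}-\mathbf{P}_{\mathbf{z}})\mathbf{y})$, collecting the columns for $j=1,\dots,M$ and using that $\mathbf{v}_j'$ is the $j$-th column of $\mathbf{V}_{\mathbf{z}}'$ yields precisely the matrix identity (\ref{J}).

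For the gradient I would start from $\|\mathbf{r}(\mathbf{z})\|_{2}^{2} = \mathbf{y}^{*}\mathbf{P}_{\mathbf{z}}\mathbf{y}$ and compute $\nabla\|\mathbf{r}\|_{2}^{2} = 2\mathbf{J}_{\mathbf{z}}^{*}\mathbf{r}(\mathbf{z}) = 2\mathbf{J}_{\mathbf{z}}^{*}\mathbf{P}_{\mathbf{z}}\mathbf{y}$. The decisive simplification is that the two terms of $\mathbf{J}_{\mathbf{z}}$ behave oppositely against $\mathbf{P}_{\mathbf{z}}\mathbf{y}$: the first term contributes a factor $(\mathbf{I}-\mathbf{P}_{\mathbf{z}})\mathbf{P}_{\mathbf{z}} = \mathbf{0}$ and therefore drops out entirely, while the second term simplifies through the projector identity $\mathbf{V}_{\mathbf{z}}^{+}\mathbf{P}_{\mathbf{z}} = \mathbf{V}_{\mathbf{z}}^{+}$. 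What survives is a single product of a diagonal matrix with $\mathbf{V}_{\mathbf{z}}^{+}\mathbf{y}$ of the form appearing in (\ref{grad}), the remaining conjugations of $\mathrm{diag}(\cdot)^{*}$ being part of the bookkeeping.

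The main obstacle is the complex-differentiation bookkeeping. Because $\mathbf{P}_{\mathbf{z}}$ depends on $\mathbf{z}$ non-holomorphically through the factor $\mathbf{V}_{\mathbf{z}}^{*}$, one must fix a consistent convention for $\partial/\partial z_j$ and track every conjugate transpose with care; in particular the second, anti-holomorphic term of $\partial\mathbf{P}_{\mathbf{z}}/\partial z_j$ is exactly what a naive holomorphic differentiation would discard, so establishing its presence, placement and conjugation is where the real effort lies. Everything else — the product and inverse rules, the projector identities $\mathbf{P}_{\mathbf{z}}^{2}=\mathbf{P}_{\mathbf{z}}$, $\mathbf{P}_{\mathbf{z}}^{*}=\mathbf{P}_{\mathbf{z}}$, $\mathbf{P}_{\mathbf{z}}\mathbf{V}_{\mathbf{z}}=\mathbf{V}_{\mathbf{z}}$, $\mathbf{V}_{\mathbf{z}}^{+}\mathbf{P}_{\mathbf{z}}=\mathbf{V}_{\mathbf{z}}^{+}$, and the rank-one column structure of $\partial\mathbf{V}_{\mathbf{z}}/\partial z_j$ — is routine linear algebra.
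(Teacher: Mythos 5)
Your proposal is correct and follows essentially the same route as the paper: the rank-one structure $\partial\mathbf{V}_{\mathbf{z}}/\partial z_{j}=\mathbf{v}_{j}'\,\mathbf{e}_{j}^{*}$, the product rule combined with the derivative-of-inverse identity, the projector identities $\mathbf{P}_{\mathbf{z}}\mathbf{V}_{\mathbf{z}}=\mathbf{V}_{\mathbf{z}}$, $\mathbf{V}_{\mathbf{z}}^{+}\mathbf{P}_{\mathbf{z}}=\mathbf{V}_{\mathbf{z}}^{+}$ collapsing everything to the two-term formula, and for the gradient exactly the same cancellation $(\mathbf{I}_{L+1}-\mathbf{P}_{\mathbf{z}})\mathbf{P}_{\mathbf{z}}=\mathbf{0}$ killing the first term. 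The only cosmetic difference is that you isolate the Golub--Pereyra projector-derivative formula as a standalone step before applying it to $\mathbf{y}$, whereas the paper differentiates $\mathbf{r}(\mathbf{z})=\mathbf{V}_{\mathbf{z}}[\mathbf{V}_{\mathbf{z}}^{*}\mathbf{V}_{\mathbf{z}}]^{-1}\mathbf{V}_{\mathbf{z}}^{*}\mathbf{y}$ directly with $\mathbf{y}$ attached, keeping (as you do) the anti-holomorphic contribution from $\mathbf{V}_{\mathbf{z}}^{*}$.
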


\begin{proof}
First, observe that $\frac{\partial }{\partial z_{j}} {\mathbf V}_{{\mathbf z}}$ is a rank-1 matrix of the form
$$ \frac{\partial }{\partial z_{j}} {\mathbf V}_{{\mathbf z}} = {\mathbf z}_{j}' \, {\mathbf e}_{j}^{*} \in {\mathbb{C}}^{(L+1) \times M}, \qquad j =1, \ldots , M,$$
where ${\mathbf z}_{j}' = (0, 1, 2z_{j}, 3z_{j}^{2}, \ldots , Lz_{j}^{L-1})^{T}$ and ${\mathbf e}_{j}$ is the $j$th unit vector of length $M$.
Then we obtain 
\begin{eqnarray*}
& & 
\frac{\partial }{\partial z_{j}} {\mathbf r}({\mathbf z}) = \frac{\partial }{\partial z_{j}}  \left( {\mathbf V}_{\mathbf z} [{\mathbf V}_{\mathbf z}^{*} {\mathbf V}_{\mathbf z}]^{-1} {\mathbf V}_{\mathbf z}^{*} {\mathbf y} \right) \\
&=& 
({\mathbf z}_{j}' \, {\mathbf e}_{j}^{*}) {\mathbf V}_{\mathbf z}^{+} {\mathbf y} - ({\mathbf V}_{\mathbf z}^{+})^{*} \left[ ({\mathbf z}_{j}' \, {\mathbf e}_{j}^{*})^{*} {\mathbf V}_{\mathbf z} + {\mathbf V}_{\mathbf z}^{*} ({\mathbf z}_{j}' \, {\mathbf e}_{j}^{*}) \right] {\mathbf V}_{\mathbf z}^{+} {\mathbf y} 
+ ({\mathbf V}_{\mathbf z}^{+})^{*} ({\mathbf z}_{j}' \, {\mathbf e}_{j}^{*})^{*} {\mathbf y} \\
&=&
({\mathbf V}_{\mathbf z}^{+} {\mathbf y})_{j}\, {\mathbf z}_{j}' -  (({\mathbf z}_{j}')^{*} {\mathbf P}_{\mathbf z} {\mathbf y}) ({\mathbf V}_{\mathbf z}^{+})^{*}  {\mathbf e}_{j} - ({\mathbf V}_{\mathbf z}^{+} {\mathbf y})_{j} {\mathbf P}_{\mathbf z} {\mathbf z}_{j}' + (({\mathbf z}_{j}')^{*} {\mathbf y}) ({\mathbf V}_{\mathbf z}^{+})^{*} {\mathbf e}_{j} \\
&=& ({\mathbf V}_{\mathbf z}^{+} {\mathbf y})_{j} ({\mathbf I}_{L+1} - {\mathbf P}_{{\mathbf z}})  {\mathbf z}_{j}' + 
 (({\mathbf z}_{j}')^{*} ({\mathbf I}_{L+1} - {\mathbf P}_{\mathbf z}) {\mathbf y}) ({\mathbf V}_{\mathbf z}^{+})^{*} {\mathbf e}_{j} \\
&=&  ({\mathbf V}_{\mathbf z}^{+} {\mathbf y})_{j} ({\mathbf I}_{L+1} - {\mathbf P}_{{\mathbf z}}) {\mathbf V}_{{\mathbf z}}' {\mathbf e}_{j} + (({\mathbf z}_{j}')^{*} ({\mathbf I}_{L+1} - {\mathbf P}_{\mathbf z}) {\mathbf y}) ({\mathbf V}_{\mathbf z}^{+})^{*} {\mathbf e}_{j},
\end{eqnarray*}
where  $({\mathbf V}_{\mathbf z}^{+} {\mathbf y})_{j}$ denotes the $j$th component of ${\mathbf V}_{\mathbf z}^{+} {\mathbf y}$. From this observation, we immediately get ${\mathbf J}_{\mathbf z}$ in (\ref{J}).
This formula implies further
\begin{eqnarray*}
{\mathbf J}_{\mathbf z}^{*} {\mathbf r}({\mathbf z}) &=&
(\rm{diag} \overline{{\mathbf V}_{\mathbf z}^{+} {\mathbf y}}) ({\mathbf V}_{\mathbf z}')^{*} ({\mathbf I} - {\mathbf P}_{\mathbf z}) {\mathbf P}_{\mathbf z} {\mathbf y} + 
(\rm{diag} (({\mathbf V}_{\mathbf z}')^{*} ({\mathbf I}_{L+1} - {\mathbf P}_{z}) {\mathbf y}))^{*} {\mathbf V}_{\mathbf z}^{+} {\mathbf P}_{\mathbf z} {\mathbf y} \\
&=& \rm{diag} (({\mathbf V}_{\mathbf z}')^{T} ({\mathbf I}_{L+1} - {\mathbf P}_{z}) \overline{\mathbf y}) \, {\mathbf V}_{\mathbf z}^{+} {\mathbf y}.
\end{eqnarray*}
\hfill \qed
\end{proof}

\begin{corollary}\label{coro}
Let ${\mathbf y} \in {\mathbb{C}}^{L+1}$ be given and assume that $(y_{k+m})_{k=0,m=0}^{L-M+1,M-1}$ has full rank $M$. Then, a vector ${\mathbf z} \in {\mathbb{C}}^{M}$ solving $(\ref{opt})$ necessarily satisfies 
$$ ({\mathbf V}_{\mathbf z}')^{*} ({\mathbf I}_{L+1} - {\mathbf P}_{\mathbf z}) {\mathbf y} = {\mathbf 0}.
$$
\end{corollary}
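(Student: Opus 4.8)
The plan is to read off the first-order optimality condition from the gradient formula (\ref{grad}) already established in the Theorem. A vector $\mathbf{z}$ solving (\ref{opt}) is an interior stationary point of the smooth real-valued objective $\|\mathbf{r}(\mathbf{z})\|_2^2 = \mathbf{y}^*\mathbf{P}_\mathbf{z}\mathbf{y}$ on $\mathbb{C}^M$ (the map $\mathbf{z}\mapsto\mathbf{P}_\mathbf{z}$ is smooth wherever $\mathbf{V}_\mathbf{z}$ has full rank, which holds near any admissible maximizer with pairwise distinct nodes), so its gradient must vanish. Setting (\ref{grad}) equal to $\mathbf{0}$ gives
$$ \mathrm{diag}\big((\mathbf{V}_\mathbf{z}')^{T}(\mathbf{I}_{L+1}-\mathbf{P}_\mathbf{z})\overline{\mathbf{y}}\big)\,\mathbf{V}_\mathbf{z}^{+}\mathbf{y} = \mathbf{0}. $$
Writing $\mathbf{c} = \mathbf{V}_\mathbf{z}^{+}\mathbf{y}$ for the associated optimal coefficient vector, the $j$-th entry of this identity reads $c_j\,[(\mathbf{V}_\mathbf{z}')^{T}(\mathbf{I}_{L+1}-\mathbf{P}_\mathbf{z})\overline{\mathbf{y}}]_j = 0$. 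Hence, once we know that each $c_j$ is nonzero, we may cancel it and conclude $(\mathbf{V}_\mathbf{z}')^{T}(\mathbf{I}_{L+1}-\mathbf{P}_\mathbf{z})\overline{\mathbf{y}} = \mathbf{0}$, which is the stated condition $(\mathbf{V}_\mathbf{z}')^{*}(\mathbf{I}_{L+1}-\mathbf{P}_\mathbf{z})\mathbf{y} = \mathbf{0}$ after conjugation and use of $\mathbf{P}_\mathbf{z}=\mathbf{P}_\mathbf{z}^{*}$.

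The step I expect to be the main obstacle is exactly the claim $c_j\neq 0$ at a maximizer, and this is where the full-rank hypothesis on the Hankel matrix $(y_{k+m})$ must be used. I would argue by contradiction: suppose $c_{j_0}=0$ at a maximizer $\mathbf{z}$ with distinct nodes. Then $\mathbf{P}_\mathbf{z}\mathbf{y}=\mathbf{V}_\mathbf{z}\mathbf{c}$ lies in the $(M-1)$-dimensional subspace $U:=\mathrm{span}\{(z_j^k)_{k=0}^{L}:j\neq j_0\}$, so $\|\mathbf{P}_\mathbf{z}\mathbf{y}\|_2=\|\mathbf{P}_U\mathbf{y}\|_2$ with residual $\mathbf{s}:=(\mathbf{I}_{L+1}-\mathbf{P}_U)\mathbf{y}$.

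To derive the contradiction, I would replace the node $z_{j_0}$ by a fresh $w\in\mathbb{C}$ distinct from the remaining nodes; the span becomes $U'=U\oplus\mathrm{span}\{(w^k)_k\}$ (the $M$ exponentials remain independent because the nodes are distinct), and a one-line projection computation gives
$$ \|\mathbf{P}_{U'}\mathbf{y}\|_2^{2}-\|\mathbf{P}_U\mathbf{y}\|_2^{2} = \frac{|\langle\mathbf{s},(w^k)_k\rangle|^{2}}{\|(\mathbf{I}_{L+1}-\mathbf{P}_U)(w^k)_k\|_2^{2}}\ \ge\ 0. $$
If $\mathbf{s}\neq\mathbf{0}$, then since the Vandermonde vectors $\{(w^k)_{k=0}^{L}:w\in\mathbb{C}\}$ span $\mathbb{C}^{L+1}$, I can pick $w$ (avoiding the finitely many existing nodes) with $\langle\mathbf{s},(w^k)_k\rangle\neq0$, strictly increasing the objective and contradicting maximality of $\mathbf{z}$. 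Hence $\mathbf{s}=\mathbf{0}$, i.e.\ $\mathbf{y}\in U$ is an exponential sum of only $M-1$ terms; but then the factorization (\ref{fak}) shows the Hankel matrix $(y_{k+m})$ has rank at most $M-1$, contradicting the full-rank assumption. Therefore $c_{j_0}\neq0$, which validates the cancellation and completes the argument.
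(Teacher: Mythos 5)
Your proof is correct, and its first half---reading the stationarity condition off the gradient formula (\ref{grad}) and cancelling the diagonal factor entrywise---is exactly the paper's argument: the paper's entire proof is the single sentence that the assertion follows from (\ref{grad}) ``using the information that ${\mathbf c} = {\mathbf V}_{\mathbf z}^{+} {\mathbf y}$ has no vanishing components.'' Where you genuinely go beyond the paper is that this non-vanishing claim is never proved there; it is carried over from the standing assumption at the start of Section \ref{sec5} (full rank of the Hankel matrix, ``therefore we can suppose'' $c_j \neq 0$), which is really a statement about how the data could be modelled, not a verified property of the coefficient vector ${\mathbf V}_{\mathbf z}^{+}{\mathbf y}$ attached to an \emph{optimal} ${\mathbf z}$. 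Your contradiction argument fills precisely this gap: if $c_{j_0}=0$ at a maximizer, then ${\mathbf P}_{\mathbf z}{\mathbf y}$ coincides with the projection onto the $(M-1)$-dimensional span $U$ of the remaining Vandermonde columns, and the rank-one projection-increment formula lets you either strictly increase the objective (\ref{opt}) by swapping in a fresh node $w$ with $\langle {\mathbf s},(w^k)_k\rangle \neq 0$---such $w$ exists because this inner product is a not-identically-zero polynomial expression with finitely many roots---contradicting optimality, or else conclude ${\mathbf y}\in U$, in which case the factorization (\ref{fak}) bounds the rank of the Hankel matrix by $M-1$, contradicting the hypothesis of the corollary. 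So the two proofs share the same mechanism, but yours is self-contained and makes visible exactly where the full-rank assumption enters, at the price of an extra variational lemma; the paper's one-liner is shorter but leaves that key step as an unproved assertion.
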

\begin{proof}
The assertion follows from (\ref{grad}) using the information that ${\mathbf c} = {\mathbf V}_{\mathbf z}^{+} {\mathbf y}$ has no vanishing components.
\hfill \qed
\end{proof}

\begin{remark}
1. The necessary condition in Corollary \ref{coro} can be used to build an iterative  algorithm for updating the vector ${\mathbf z}$ where we start with ${\mathbf z}^{(0)}$ obtained from the ESPRIT algorithm 2. Then we search for ${\mathbf z}^{(j+1)}$ by solving 
 $$({\mathbf V}_{{\mathbf z}^{(j+1)}}')^{*} ({\mathbf I}_{L+1} - {\mathbf P}_{{\mathbf z}^{(j)}}) {\mathbf y} = {\mathbf 0},$$
i.e., by computing the zeros of the polynomial with coefficient vector 
$$ \rm{diag} (0,1,2, \ldots , L) \, ({\mathbf I}_{L+1} - {\mathbf P}_{\mathbf z}^{(j)}) {\mathbf y}$$ 
and taking the subset of $M$ zeros which is closest to the previous set  ${\mathbf z}^{(j)}$.\\
2. This approach is different from most ideas  to solve (\ref{min}) in the literature, see e.g.\ \cite{BM86,OS91,OS95} and the recent survey \cite{ZP19}.  In these papers, one first transfers  the problem  of finding ${\mathbf z} \in {\mathbb{C}}^{M}$  into the problem of  finding the vector ${\mathbf p} =(p_{k})_{k=0}^{M} \in {\mathbb{C}}^{M+1}$ with $\| {\mathbf p} \|_{2}= 1$,  such that $p(z_{j}) = \sum_{k=0}^{M} p_{k} z_{j}^{k} =0$ for all $j=1, \ldots , M$, thereby imitating  the idea of Prony's method.
Introducing the matrix
$$ {\mathbf X}_{\mathbf p}^{T} = \left( \begin{array}{cccccccc}
p_{0} & p_{1} & \ldots & & p_{M} & & &  \\
 & p_{0} & p_{1} & \ldots &  & p_{M} & & \\
  & & \ddots & & & & \ddots & \\
  & & & p_{0}& p_{1} & \ldots &  & p_{M} \end{array} \right) \in \mathbb{C}^{(L-M+1) \times (L+1)}$$
that satisfies ${\mathbf X}_{\mathbf p}^{T} {\mathbf V}_{\mathbf z} = {\mathbf 0}$, we obtain a projection matrix 
 $$ \overline{\mathbf P}_{\mathbf p} \coloneqq \overline{\mathbf X}_{\mathbf p} \overline{\mathbf X}_{\mathbf p}^{+} = \overline{\mathbf X}_{\mathbf p}  [ {\mathbf X}_{\mathbf p}^{T} \overline{\mathbf X}_{\mathbf p}]^{-1} {\mathbf X}_{\mathbf p}^{T} = ({\mathbf I}_{L+1} - {\mathbf P}_{\mathbf z}), $$
and (\ref{opt}) can be rephrased   as 
$$ \mathop{\textrm{argmin}}\limits_{\mathbf{p} \in \mathbb{C}^{M+1} \atop \| {\mathbf p} \|_{2} = 1} \| \overline{\mathbf P}_{\mathbf b} {\mathbf y} \|_{2}^{2} = \mathop{\textrm{argmin}}\limits_{\mathbf{p} \in \mathbb{C}^{M+1} \atop \| {\mathbf p} \|_{2} = 1} {\mathbf y}^{*} \overline{\mathbf X}_{\mathbf p}  [ {\mathbf X}_{\mathbf p}^{T} \overline{\mathbf X}_{\mathbf p}]^{-1} {\mathbf X}_{\mathbf p}^{T} {\mathbf y}. $$
\end{remark}

\subsection{Gau{\ss}-Newton and Levenberg-Marquardt iteration}\label{subsecleven}
Another approach than given in Remark 3.1 to solve the non-linear least squares problem (\ref{opt}) is the following.
We approximate ${\mathbf r}({\mathbf z} + \deltar)$ using its first order Taylor expansion ${\mathbf r}({\mathbf z}) + {\mathbf J}_{\mathbf z} \deltar$. Now, instead of minimizing  $\| {\mathbf r}({\mathbf z} + \deltar) \|_{2}^{2}$ we consider 
$$
 \mathop{\textrm{argmin}}\limits_{\deltas \in \mathbb{C}^{M}} \|{\mathbf r}({\mathbf z}) + {\mathbf J}_{\mathbf z} \deltar \|_{2}^{2} =   \mathop{\textrm{argmin}}\limits_{\deltas \in \mathbb{C}^{M}} ( \|{\mathbf r}({\mathbf z}) \|_{2}^{2} + ({\mathbf r}({\mathbf z}))^{*} {\mathbf J}_{\mathbf z} \deltar + \deltar^{*}  {\mathbf J}_{\mathbf z}^{*} {\mathbf r}({\mathbf z}) + \deltar^{*}  {\mathbf J}_{\mathbf z}^{*}{\mathbf J}_{\mathbf z} \deltar )
$$
 which yields
 $$ 2 \rm{Re} ( {\mathbf J}_{\mathbf z}^{*} {\mathbf r}({\mathbf z})) + 2 {\mathbf J}_{\mathbf z}^{*} {\mathbf J}_{\mathbf z} \deltar = {\mathbf 0}.
 $$
 Thus, starting with the vector ${\mathbf z}^{(0)}$ obtained from Algorithm 2, the $j$th step of the Gau{\ss}-Newton iteration is of the form
 $$ ({\mathbf J}_{{\mathbf z}^{(j)}}^{*} {\mathbf J}_{{\mathbf z}^{(j)}} )  \deltar^{(j)} = - \rm{Re} ( {\mathbf J}_{{\mathbf z}^{(j)}}^{*} {\mathbf r}({\mathbf z}^{(j)}))
 $$
with ${\mathbf z}^{(j+1)} = {\mathbf z}^{(j)} + \deltar^{(j)}$. Since
 $({\mathbf I}_{L+1} - {\mathbf P}_{{\mathbf z}^{(j)}}) {\mathbf y}$ may be already close to the zero vector, the matrix $({\mathbf J}_{{\mathbf z}^{(j)}}^{*} {\mathbf J}_{{\mathbf z}^{(j)}} )$  is usually ill-conditioned. Therefore, we regularize  by changing the matrix in each step  to 
 $({\mathbf J}_{{\mathbf z}^{(j)}}^{*} {\mathbf J}_{{\mathbf z}^{(j)}} ) + \lambda_{j} {\mathbf I}_{M}$ and obtain the Levenberg-Marquardt iteration
  $$ (({\mathbf J}_{{\mathbf z}^{(j)}}^{*} {\mathbf J}_{{\mathbf z}^{(j)}} ) + \lambda_{j} {\mathbf I}_{M}) \deltar^{(j)} = - \rm{Re} ( {\mathbf J}_{{\mathbf z}^{(j)}}^{*} {\mathbf r}({\mathbf z}^{(j)})).
 $$
 In this algorithm, we need to fix the parameters $\lambda_{j}$, which are usually chosen very small.
If we arrive at a (local) maximum, then the right-hand side in the Levenberg-Marquardt iteration vanishes, and we obtain $\deltar^{(j)} = {\mathbf 0}$.

\begin{remark}
1. The considered non-linear least squares problem is also closely related to  structured low-rank approximation, see \cite{Mar18,UM14}.
Further, instead of the Euclidean norm, one can consider the maximum norm, see \cite{BH05,Ha19} or the 1-norm, see \cite{Skr17}.

2. Some questions remain. How good is this approximation and what is the rate of convergence with respect to $M$.The authors are not aware of a complete answer to this question. However, in \cite{BH05} it has been shown that the function $1/x$ can be approximated by an exponential sum with an error ${\mathcal O}(\exp(c\sqrt{M})$. Also the results in \cite{BM05} and \cite{PP19} indicate that we can hope for an exponential decay of the approximation error for a larger class of functions.

\end{remark}
%
%
\bibliography{myBib}{}
\bibliographystyle{bibtex/splncs_srt}

%
%
%
%
%
%
%
%
\end{document}